\def\[{\begin{equation}}
\def\]{\end{equation}}
\newtheorem{assum}{Assumption}[section]
\newtheorem{exam}{Example}[section]
\newtheorem{pro}{Property}[section]
\numberwithin{equation}{section}
\begin{document}
\graphicspath{{./FIG/}}

\title{On the properties of tensor complementarity problems}

%\titlerunning{Relaxed projection method}     % if too long for running head

\author{Wen Yu \and Chen Ling \and Hongjin He}

%\authorrunning{F. Author \and S. Author} % if too long for running head

\institute{W. Yu \and C. Ling\and H. He \at
Department of Mathematics, School of Science, Hangzhou Dianzi University, Hangzhou, 310018, China.\\
\email{yuwen\_hdu@163.com}
\and C. Ling \at
\email{cling\_zufe@sina.com}
\and H. He (\Letter) \at
\email{hehjmath@hdu.edu.cn}
 }

\date{Received: date / Accepted: date}
% The correct dates will be entered by the editor

\maketitle

\begin{abstract}
Properties of solutions of the {\it tensor complementarity problem} (TCP) for structured tensors have been investigated in recent literature. In this paper, we make further contributions on this problem. Specifically, we first derive solution existence theorems for TCPs on general cones from the results studied in the {\it nonlinear complementarity problem} literature. An interesting byproduct is that conditions (e.g., strict copositivity) of solution existence results for TCPs on the nonnegative cone can be reduced to copositivity, which, to the best of our knowledge, is the weakest requirement in the current TCP literature. Moreover, we study the topological properties of the solution set and stability of the TCP at a given solution, which are not discussed before and further enrich the theory of TCPs.

\keywords{Tensor complementarity problem \and Nonlinear complementarity problem\and Copositive\and Strictly copositive \and Stability.}

% \PACS{PACS code1 \and PACS code2 \and more}

\subclass{15A18 \and 15A69 \and 65K15\and 90C30\and 90C33}
\end{abstract}

\section{Introduction}\label{Introduction}
The {\it complementarity problem} is a long historical topic that has become a well-established and fruitful discipline within mathematical programming. We here refer the reader to monographs \cite{CPS92,FP03,Isc92} and surveys \cite{FP97,HP90b} for the well developed basic theory, numerical algorithm, and applications of complementarity problems.

In recent decades, with the rapid development of the discipline of tensors, the so-called {\it tensor complementarity problem} (TCP) over the nonnegative cone was introduced recently for the research on structured tensors \cite{SQ14}. It is well known that the concept of tensors is a natural generalization of matrices. Therefore, the classical {\it linear complementarity problem} (LCP) is a special case of TCPs. However, most of the well established properties of LCPs cannot be extended to TCPs directly due to the complicated structure of tensors. According to the definition of TCP, we can easily see that such a model falls into a special case of the {\it nonlinear complementarity problem} (NCP), and thus many results of NCPs are certainly applicable to TCPs, which, however, do not often embody the structure of tensors. Comparatively, we are more interested in some specialized properties of TCPs by fully considering the structure of tensors. For instance, some recent papers are dedicated to showing the existence of solutions of TCPs with some structured tensors (e.g., nonnegative tensors \cite{HSW15,SQ14}, symmetric positive definite tensors and copositive tensors \cite{CQW16}, (strictly-) semi-positive tensors \cite{SQ16,WHB16}, Z-tensors \cite{GLQX15} and M-tensors \cite{DQW13,ZQZ14}, ER-tensors \cite{BHW16}, and P-tensors \cite{BHW16,DLQ15,SQ14}). In \cite{BHW16}, the authors considered the property of global uniqueness and solvability for TCPs with particular tensors. Along with the booming of sparse optimization, Luo et al. \cite{LQX15} studied the sparsest solutions to TCPs. Most recently, Huang and Qi \cite{HQ16} reformulated an $n$-person noncooperative game as a TCP showing an interesting application of TCPs in management science.

Note that all results mentioned above focus on the special case, TCPs over the nonnegative cone, and require some relatively stronger conditions. Therefore, there are three natural questions: i) can we consider the TCP on a general cone? ii) can we establish the solution existence theorem under weaker conditions on tensors than the previous results? iii) what more properties can we obtain for the problem under consideration?

In this paper, we consider the TCP over a general cone extending the model introduced in \cite{SQ14}. As an special case of NCPs, we derive some specific solution existence results for the TCPs under consideration from the results presented by Gowda and Pang \cite{GP92}. An interesting consequence is that conditions (e.g., strict copositivity) of the solution existence for TCPs over the nonnegative cone can be weakened to copositivity, which is weaker than the requirements investigated in the TCP literature. The other two important results are the topological properties of the solution set and stability of solutions of the general TCP. To the best of our knowledge, such properties are not studied in the current TCP literature. All results presented in this paper further enrich the theory of TCPs.

The structure of this paper is organized as follows. In Section \ref{probdes}, we first describe the general model of the TCP under consideration and introduce some notations that will be used throughout. In Section \ref{Prelim}, we summarize some definitions and properties, which are preparations of the subsequent analysis. In Section \ref{Existence}, based on the results of Gowda and Pang \cite{GP92}, we present the existence of solutions of TCPs under mild conditions. In Section \ref{sectop}, we show the topological properties and stability of solutions of general TCPs. Finally, we complete this paper with drawing some concluding remarks in Section \ref{SecCon}.

\section{The Model and Notation}\label{probdes}
The concept of tensors is a natural generalization of matrices. Notationally,
let $\mathcal{A}:=(a_{i_1i_2\ldots i_m})_{1\leq i_1, i_2,\ldots,i_m\leq n}$ denote an $m$-th order $n$-dimensional square tensor, where $a_{i_1i_2\ldots i_m}\in \mathbb{R}$. Denote by ${\mathcal T}_{m,n}$ the space of all $m$-th order $n$-dimensional square tensors. Clearly, we can see that ${\mathcal T}_{m,n}$ is a linear space of dimension $n^m$. Here, we shall mention that all the tensors discussed in this paper are real. Besides, we denote by $\mathcal{I}:=(\sigma_{i_1\cdots i_m})$ the unit tensor in ${\mathcal T}_{m,n}$, where $\sigma_{i_1\cdots i_m}$ is the Kronecker symbol
$$
\sigma_{i_1\cdots i_m}:=\left\{
\begin{array}{ll}
1,&\;\;{\rm if~}i_1=\cdots =i_m,\\
0,&\;\;{\rm otherwise}.
\end{array}
\right.
$$

For given $\mathcal{A}\in {\mathcal T}_{m,n}$ and ${\bf q}\in \mathbb{R}^n$, the {\it tensor complementarity problem} (TCP) refers to the task of finding a vector ${\bf x}\in \mathbb{R}^n$ such that
\begin{equation}\label{TCP}
{\bf x}\in K,~{\bf w}:=\mathcal{A}{\bf x}^{m-1}+{\bf q}\in K^*,~~~{\rm and}~~~{\bf x}\perp{\bf w}:=\langle {\bf x},{\bf w}\rangle=0,
\end{equation}
where $K\subset\mathbb{R}^n$ is a given closed and convex pointed cone, $K^*$ is the dual cone of $K$ defined by
$$
K^*:=\left\{{\bf y}\in \mathbb{R}^n~|~\langle {\bf y},{\bf x}\rangle \geq 0,~\forall~{\bf x}\in K\right\},
$$
and $\langle\cdot,\cdot\rangle$ denotes the standard inner product in real Euclidean space, the function $F({\bf x}):=\mathcal{A}{\bf x}^{m-1}:~{\mathbb R}^n\to{\mathbb R}^n$ whose $i$-th component is given by
\begin{equation}\label{tenx}
F_i({\bf x}):=(\mathcal{A}{\bf x}^{m-1})_i:=\sum_{i_2,\ldots,i_m=1}^na_{ii_2\ldots i_m}x_{i_2}\cdots x_{i_m}.
\end{equation}
Throughout, we define $\mathcal{A}{\bf x}^m$ as the value at ${\bf x}$ of a homogeneous polynomial of the form
$$\mathcal{A}{\bf x}^{m}:={\bf x}^\top \mathcal{A}{\bf x}^{m-1}:=\sum_{i_1,i_2,\ldots,i_m=1}^na_{i_1i_2\cdots i_m}x_{i_1}x_{i_2}\cdots x_{i_m}.$$
Obviously, each component of $\mathcal{A}{\bf x}^{m-1}$ is a homogeneous polynomial of degree $m-1$. Note that TCP \eqref{TCP} immediately reduces to the model studied in \cite{BHW16,SQ14,SQ16} when taking $K:={\mathbb R}^n_{+}$. In what follows, we denote TCP \eqref{TCP} by ${\rm TCP}(K, {\bf q},\mathcal{A})$ for notational convenience. Moreover, we denote by SOL($K, {\bf q},\mathcal{A}$) the solution set of TCP($K, {\bf q},\mathcal{A}$), i.e.,
$$ {\rm SOL}(K,{\bf q},\mathcal{A}):=\left\{{\bf x}\in \mathbb{R}^{n}~|~K\ni{\bf x}\perp(\mathcal{A}{\bf x}^{m-1}+{\bf q})\in K^*\right\}.$$
%In particular, when the cone $K$ and the tensor $\mathcal{A}$ are fixed, we shall use the notation $S({\bf q})$ as an abbreviation of ${\rm SOL}(K, {\bf q}, \mathcal{A})$. In this situation, the set $S({\bf q})$ can be viewed as a multifunction from $\mathbb{R}^n$ into subsets of $\mathbb{R}^n$.
It is noteworthy that the solution set ${\rm SOL}(K, {\bf q}, \mathcal{A})$ is possibly empty for general tensors. Therefore, it is necessary to investigate that what tensors could make the solution set ${\rm SOL}(K, {\bf q}, \mathcal{A})$ nonempty and what more properties we can get for such tensors.

Throughout this paper, let $I_n := \{1, 2, \cdots, n\}$ be an index set. Denote $\mathbb{R}^n$ the real Euclidean space of column vectors with length $n$, i.e., $$\mathbb{R}^n := \left\{{\bf x}=(x_1, x_2,\ldots, x_n)^\top~|~ x_i\in \mathbb{R}, i\in I_n\right\},$$ where $\mathbb{R}$ is the set of real numbers and the symbol $^\top$ represents the transpose. Correspondingly, $\mathbb{R}_+^n:=\{{\bf x}\in \mathbb{R}^n~|~{\bf x}\geq {\bf 0}\}$ and $\mathbb{R}_{++}^n:=\{{\bf x}\in \mathbb{R}^n~|~{\bf x}> {\bf 0}\}$, where ${\bf x}\geq {\bf 0}~(>{\bf 0})$ means $x_i\geq 0~(x_i>0)$ for all $i\in I_n$. For ${\bf u}\in \mathbb{R}^n$ and ${\bf v}\in \mathbb{R}^m$, we denote by $({\bf u},{\bf v})$ the column vector $({\bf u}^\top,{\bf v}^\top)^\top$ in $\mathbb{R}^{n+m}$ for simplicity. For a vector ${\bf x}\in \mathbb{R}^n$ and a real number $p\geq 0$, denote ${\bf x}^{[p]}:=(x_1^p,x_2^p,\ldots, x_n^p)^\top$. Let ${\bf e}_n:=(1,1,\ldots,1)^\top$ with length $n$. Let $\mathbb{B}_n({\bf x},r)$ represent the closed ball centered at ${\bf x}$ with radius
$r$ in ${\mathbb R}^n$, and in particular, denote by $\mathbb{B}_n$ the unit sphere centered at ${\bf 0}\in {\mathbb R}^n$. For a given subset ${\mathcal N}$ in $\mathbb{R}^n$, denote by ${\rm cl}({\mathcal N})$ the topological closure of ${\mathcal N}$.

For given $\mathcal{A}\in {\mathcal T}_{m,n}$ and a nonempty subset $\alpha$ of $I_n$, we denote the principal sub-tensor of $\mathcal{A}$ by $\mathcal{A}_\alpha$, which is obtained by homogeneous polynomial $\mathcal{A}{\bf x}^m$ for all ${\bf x}:=(x_1,x_2,\ldots,x_n)^\top$ with $x_i=0$ for $\bar\alpha=I_n\backslash \alpha$. So, $\mathcal{A}_\alpha\in {\mathcal T}_{m,|\alpha|}$, where the symbol $|\alpha|$ denotes the cardinality of $\alpha$. Correspondingly, we denote the sub-tensor of $\mathcal{A}$ by $\mathcal{A}_{\bar \alpha\alpha}$, which consists of the elements $a_{i_1i_2\ldots i_m}$ in $\mathcal{A}$ with $i_1\not\in \alpha$ and $i_2,\ldots,i_m\in \alpha$. Notice that $\mathcal{A}_{\bar \alpha\alpha}$ is not a square tensor. If the entries $a_{i_1i_2\ldots i_m}$ of a given tensor ${\mathcal A}$ are invariant under any permutation of their indices, then $\mathcal{A}$ is called a {\it symmetric} tensor. If for every $i\in I_n$, $\mathcal{A}_i:=(a_{ii_2\ldots i_m})_{1\leq i_2,\ldots,i_m\leq n}$, an $(m-1)$-th order $n$-dimensional square tensor, is symmetric, then $\mathcal{A}$ is called a {\it sub-symmetric} tensor with respect to the indices $\{i_2,\ldots,i_m\}$.  Apparently, a symmetric tensor $\mathcal{A}$ must be sub-symmetric, but the reverse is not true. For given $\mathcal{A}:=(a_{i_1i_2\ldots i_m})\in \mathcal{T}_{m,n}$ and ${\bf x}\in \mathbb{R}^n$, $\mathcal{A} {\bf x}^{m-2}$ denotes the $n\times n$ matrix with its $(i,j)$-th element given by
$$\left(\mathcal{A} {\bf x}^{m-2}\right)_{ij}:=\sum_{i_3,\ldots,i_m=1}^na_{iji_3\ldots i_m} x_{i_3}\cdots  x_{i_m}.$$

\section{Definitions and Lemmas}\label{Prelim}

In this section, we introduce some basic definitions and lemmas, which pave the way
of our further analysis.

Let $K$ be a closed and convex pointed cone in $\mathbb{R}^n$. Recall that a nonempty set $\Omega\subset \mathbb{R}^n$ generates $K$, thereby writing $K:={\rm cone}(\Omega)$ if $K:=\{t {\bf s}\;:\;{\bf s}\in \Omega,\;t\in \mathbb{R}_+\}$. If in addition $\Omega$ does not contain the zero vector and for each ${\bf x}\in K\backslash \{{\bf 0}\}$, there exist unique ${\bf s}\in \Omega$ and $t\in \mathbb{R}_+$ such that ${\bf x}=t{\bf s}$, then we say that $\Omega$ is a {\it basis} of $K$. Whenever $\Omega$ is a finite set, ${\rm cone}({\rm conv}(\Omega))$ is called a {\it polyhedral} cone, where ${\rm conv}(\Omega)$ stands for the {\it convex hull} of $\Omega$. It is clear that $\mathbb{R}_+^n$ is a closed convex cone in $\mathbb{R}^n$, whose a compact basis is $\Delta:=\{{\bf x}\in \mathbb{R}_+^n~|~{\bf e}_n^\top {\bf x}=1\}$.

\begin{definition}\label{Kpositive} Let $\mathcal{A}=(a_{i_1i_2\ldots i_m})\in {\mathcal T}_{m,n}$, and let $K$ be a given closed convex cone in $\mathbb{R}^n$. Then, $\mathcal{A}$ is said to be
\begin{itemize}
\itemindent 8pt
\item[\rm(i)] $K$-positive semi-definite, if $\mathcal{A}{\bf x}^m\geq 0$ for any vector ${\bf x}\in K$;

\item[\rm(ii)] $K$-positive definite, if $\mathcal{A}{\bf x}^m> 0$ for any vector ${\bf x}\in K\backslash\{{\bf 0}\}$.
\end{itemize}
In particular, an $\mathbb{R}_+^n$-positive semi-definite ($\mathbb{R}_+^n$-positive definite) tensor is called {\it copositive} ({\it strictly copositive}) tensor.
\end{definition}

The following property characterizes $K$-positive semi-definite (definite) tensors and extends ones proposed in \cite{Qi13}. For the sake of completeness, here we still present its proof.

\begin{pro}
 Let $K$ be a closed convex cone associated with a compact basis $\Omega$.
Let $\mathcal{A}\in {\mathcal T}_{m,n}$. Then, $\mathcal{A}$ is $K$-positive semi-definite (definite) if
and only if
$$\min\left\{\mathcal{A}{\bf x}^m ~|~ {\bf x}\in \Omega\right\}\geq 0 ~(>0).
$$
\end{pro}

\begin{proof}
For every ${\bf x}\in K\backslash\{{\bf 0}\}$, there exist unique ${\bf s}\in \Omega$ and $t\in \mathbb{R}_{++}$ such that ${\bf x}=t{\bf s}$. Consequently, the desired result follows from Definition \ref{Kpositive}.
\qed\end{proof}

\begin{definition}\label{Kregular}
Let $\mathcal{A}\in {\mathcal T}_{m,n}$, and let $K$ be a given closed convex pointed cone in $\mathbb{R}^n$. We say that $\mathcal{A}$ is $K$-regular if it satisfies
$$
\mathcal{A}{\bf x}^m\neq 0,~~ \forall~ {\bf x}\in K\backslash \{{\bf 0}\}.
$$
\end{definition}

\begin{definition}\label{NonsingularDef}
 Let $\mathcal{A}\in {\mathcal T}_{m,n}$, and let $K$ be a given closed convex cone in $\mathbb{R}^n$. We say that $\mathcal{A}$ is $K$-singular if it satisfies
 $$\{{\bf x}\in K\backslash\{{\bf 0}\}~|~\mathcal{A}{\bf x}^{m-1}={\bf 0}\}\neq \emptyset.$$
 Otherwise, we say that $\mathcal{A}$ is $K$-nonsingular. In particular, we say that $\mathcal{A}$ is singular if it satisfies
 $$\{{\bf x}\in \mathbb{C}^n\backslash\{{\bf 0}\}~|~\mathcal{A}{\bf x}^{m-1}={\bf 0}\}\neq \emptyset.$$
 Otherwise, $\mathcal{A}$ is said to be nonsingular.

\end{definition}

From Definition \ref{NonsingularDef}, it is easy to see that, $\bar {\mathcal{A}}$ is $\bar K$-nonsingular if and only if, for every sequence $\{x^{(l)}\}\subset \bar K$ with $\|x^{(l)}\|\rightarrow \infty$ as $l\rightarrow\infty$, there exists a subsequence $\{x^{(l_i)}\}$ of $\{x^{(l)}\}$ such that $\|\bar {\mathcal{A}}(x^{(l_i)})^{m-1}\|\rightarrow \infty$ as $l_i\rightarrow \infty$.
%\begin{definition}\label{Copositive}
%Let $K$ be a closed convex cone in $\mathbb{R}^n$ and $\mathcal{A}\in \mathcal{\mathcal T}_{m,n}$. We say that $\mathcal{A}$ is a (resp. %strictly) $K$-positive tensor, if $\mathcal{A}x^m\geq 0$ (resp. $>0$) for any $x\in K\backslash\{0\}$. If $K=\mathbb{R}_+^n$, %the (strictly) $K$-positive tensor $\mathcal{A}$ is said the (strictly) copositive tensor.
%\end{definition}
It is clear that, if $\mathcal{A}$ is nonsingular, then for any given closed convex cone $K$ in $\mathbb{R}^n$, $\mathcal{A}$ is $K$-nonsingular.  From Definitions \ref{Kpositive} and \ref{NonsingularDef}, it is easy to see that, a $K$-positive definite tensor $\mathcal{A}$ must be $K$-nonsingular. As we know, if a symmetric matrix $A\in {\mathcal T}_{2,n}$ is positive semi-definite and nonsingular, then it must be positive definite. However, if $A$ is asymmetric, the above conclusion is not true.  The following example shows that, when $m\geq 3$, even if $\mathcal{A}\in {\mathcal T}_{m,n}$ is symmetric, $K$-positive semi-definiteness and $K$-nonsingularity of $\mathcal{A}$ do not imply the $K$-positive definiteness of $\mathcal{A}$.
\begin{exam}
Let $\mathcal{A}=(a_{i_1i_2i_3})\in {\mathcal T}_{3,2}$, where $a_{122}=a_{212}=a_{221}=a_{211}=a_{121}=a_{112}=1$ and all other $a_{i_1i_2i_3}=0$. It is clear that $\mathcal{A}$ is symmetric. Moreover, for any ${\bf x}\in \mathbb{R}^2$, we have
$$\mathcal{A}{\bf x}^2=\left[
\begin{array}{c}
x^2_2+2x_1x_2\\
x^2_1+2x_1x_2
\end{array}
\right]\quad\text{and}\quad\mathcal{A}{\bf x}^3=3x_1x_2^2+3x_1^2x_2.$$
Consequently, it is easy to verify that $\mathcal{A}$ is copositive and nonsingular, but not strictly copositive.
\end{exam}

Denote by $C(\mathbb{R}^n)$ the set of nonzero closed convex cones in $\mathbb{R}^n$, which is associated with the natural metric defined by
$$
\delta(K_1,K_2) := \sup_{\|{\bf z}\|\leq 1}|{\rm dist}({\bf z},K_1)- {\rm dist}({\bf z},K_2)|,
$$
where $K_1,K_2\in C(\mathbb{R}^n)$ and ${\rm dist}({\bf z},K) := {\rm inf}_{{\bf u}\in K}\|{\bf z}-{\bf u}\|$ stands for the distance from ${\bf z}$ to $K$. An equivalent
way of defining $\delta$ is
\begin{equation}\label{ddKK}
\delta(K_1,K_2) := {\rm haus}(K_1\cap {\mathbb B}_n,K_2 \cap {\mathbb B}_n),
\end{equation}
where
$$
{\rm haus}(C_1,C_2) := \max\left\{\sup_{{\bf z}\in C_1}{\rm dist}({\bf z},C_2), \sup_{{\bf z}\in C_2}{\rm dist}({\bf z},C_1)\right\}
$$
stands for the Hausdorff distance between the compact sets $C_1,C_2\subset \mathbb{R}^n$ (see \cite[pp.85-86]{Be93}). For more details of the metric $\delta$, see \cite{RW98}. According to Walkup and Wets \cite{WW67}, the
operation $K\mapsto  K^*$ is an isometry on the space $(C(\mathbb{R}^n), \delta)$, that is to say,
$$
\delta(K_1^*,K_2^*)=\delta(K_1,K_2), ~~~{\rm for~all~} K_1,K_2\in C(\mathbb{R}^n).
$$

From Definition \ref{NonsingularDef}, we immediately obtain the following lemma.

\begin{pro}\label{SinglocalP}
Let $(\bar K,\bar{\mathcal{A}})\in C(\mathbb{R}^n)\times {\mathcal T}_{m,n}$. If $\bar{\mathcal{A}}$ is $\bar K$-nonsingular, then there exists a neighborhood  $U$ of $(\bar K,\bar{\mathcal{A}})$ such that $\mathcal{A}$ is $K$-nonsingular for any $(K,\mathcal{A})\in U$.
\end{pro}

\begin{proof}
We prove it by contradiction. Suppose that the conclusion is not true, then there exists a sequence of $\{(K_l,\mathcal{A}^l)\}$ satisfying $(K_l,\mathcal{A}^l)\rightarrow (\bar K,\bar{\mathcal{A}})$ as $l\rightarrow\infty$, such that $\mathcal{A}^l$ is $K_l$-singular for all $l$. Consequently, there exists ${\bf x}^{(l)}\in K_l\cap {\mathbb B}_n$ for every $l$, such that
\begin{equation}\label{barKsing}
 \mathcal{A}^l({\bf x}^{(l)})^{m-1}={\bf 0}~~~~\forall~l=1,2,\ldots.
\end{equation}
Since $\{{\bf x}^{(l)}\}\subseteq {\mathbb B}_n$ and ${\mathbb B}_n$ is compact, without loss of generality, we assume ${\bf x}^{(l)}\rightarrow \bar {\bf x}\in {\mathbb B}_n$ as $l\rightarrow\infty$. Furthermore, since ${\bf x}^{(l)}\in K_l\cap {\mathbb B}_n$  for every $l$ and $\delta(K_l,\bar K)\rightarrow 0$ as $l\rightarrow\infty$, by (\ref{ddKK}), it is not difficult to know that $\bar x\in \bar K$. Accordingly, by letting $l\rightarrow\infty$ in (\ref{barKsing}), it follows from $\mathcal{A}^l\rightarrow \bar{\mathcal{A}}$ and ${\bf x}^{(l)}\rightarrow \bar {\bf x}$ that $\bar {\mathcal{A}}\bar{{\bf x}}^{m-1}={\bf 0}$. It is a contradiction, because $\bar{\mathcal{A}}$ is $\bar K$-nonsingular and $\bar{\bf x}\in \bar K\backslash\{{\bf 0}\}$.
\qed \end{proof}

Hereafter, for given $\mathcal{A}\in {\mathcal T}_{m,n}$ and $K\in C(\mathbb{R}^n)$, we denote
$$
\displaystyle{\rm Tpos}(K,\mathcal{A}):=\left\{\mathcal{A}{\bf x}^{m-1}~|~{\bf x}\in K\right\}.
$$
Clearly, when taking $m=2$ (i.e., $\mathcal{A}$ is a matrix), ${\rm Tpos}({\mathbb{R}_+^n},\mathcal{A})$ reduces to the closed convex cone generated by $\mathcal{A}$ (see \cite{CPS92}). However, when $m\geq 3$, ${\rm Tpos}({\mathbb{R}_+^n},\mathcal{A})$ is not convex in general, but still remains the closedness that will be proved in the following lemma.

\begin{lemma}\label{Closed}
Let $\bar{\mathcal{A}}\in {\mathcal T}_{m,n}$ and $\bar K\in C(\mathbb{R}^n)$. If $\bar{\mathcal{A}}$ is $\bar K$-nonsingular, then ${\rm Tpos}(\bar K,\bar{\mathcal{A}})$ is a closed cone.
\end{lemma}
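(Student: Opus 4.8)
The plan is to establish two things separately: that ${\rm Tpos}(\bar K,\bar{\mathcal{A}})$ is a cone, and that it is closed. The cone property is immediate from the homogeneity of the map ${\bf x}\mapsto \bar{\mathcal{A}}{\bf x}^{m-1}$: for any ${\bf y}=\bar{\mathcal{A}}{\bf x}^{m-1}$ with ${\bf x}\in \bar K$ and any $t\geq 0$, putting $s:=t^{1/(m-1)}\geq 0$ gives $s{\bf x}\in \bar K$ (as $\bar K$ is a cone) and $\bar{\mathcal{A}}(s{\bf x})^{m-1}=s^{m-1}\bar{\mathcal{A}}{\bf x}^{m-1}=t{\bf y}$, so $t{\bf y}\in {\rm Tpos}(\bar K,\bar{\mathcal{A}})$. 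Hence the real work lies in the closedness.

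For closedness I would argue sequentially. Take an arbitrary sequence ${\bf y}^{(l)}=\bar{\mathcal{A}}({\bf x}^{(l)})^{m-1}$ with ${\bf x}^{(l)}\in \bar K$ and ${\bf y}^{(l)}\to {\bf y}$, and aim to produce some $\bar{\bf x}\in \bar K$ with $\bar{\mathcal{A}}\bar{\bf x}^{m-1}={\bf y}$. The decisive step is to show that the preimage sequence $\{{\bf x}^{(l)}\}$ may be taken bounded, and this is exactly where $\bar K$-nonsingularity enters. If $\{{\bf x}^{(l)}\}$ were unbounded, I could extract a subsequence with $\|{\bf x}^{(l)}\|\to\infty$; by the coercivity-type characterization of $\bar K$-nonsingularity recorded right after Definition \ref{NonsingularDef}, a further subsequence would satisfy $\|\bar{\mathcal{A}}({\bf x}^{(l_i)})^{m-1}\|=\|{\bf y}^{(l_i)}\|\to\infty$, contradicting the convergence (hence boundedness) of $\{{\bf y}^{(l)}\}$. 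Therefore $\{{\bf x}^{(l)}\}$ is bounded.

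Once boundedness is in hand, the rest is routine. By compactness I extract a convergent subsequence ${\bf x}^{(l_j)}\to \bar{\bf x}$; since $\bar K$ is closed, $\bar{\bf x}\in \bar K$, and since each component of $\bar{\mathcal{A}}{\bf x}^{m-1}$ is a polynomial in ${\bf x}$, the map is continuous, so $\bar{\mathcal{A}}({\bf x}^{(l_j)})^{m-1}\to \bar{\mathcal{A}}\bar{\bf x}^{m-1}$. Matching this with ${\bf y}^{(l_j)}\to {\bf y}$ yields ${\bf y}=\bar{\mathcal{A}}\bar{\bf x}^{m-1}\in {\rm Tpos}(\bar K,\bar{\mathcal{A}})$, proving closedness. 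I expect the main obstacle to be precisely the control of the unbounded case: without nonsingularity the preimages could escape to infinity while their images converge, so the argument genuinely hinges on the characterization of $\bar K$-nonsingularity, and checking that this characterization applies cleanly under the repeated subsequence extraction is the one point requiring care.
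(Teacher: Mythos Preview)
Your proof is correct. The cone part matches the paper exactly. For closedness, however, you take a different route from the paper: the paper argues structurally, showing that ${\rm Tpos}(\bar K,\bar{\mathcal{A}})={\rm cone}\bigl(F(\bar K\cap\mathbb{B}_n)\bigr)$ where $F({\bf x})=\bar{\mathcal{A}}{\bf x}^{m-1}$, observes that $F(\bar K\cap\mathbb{B}_n)$ is compact (as the continuous image of a compact set) and excludes ${\bf 0}$ by $\bar K$-nonsingularity, and then invokes the standard fact that the cone generated by a compact set not containing the origin is closed. Your argument is a direct sequential one: you control the preimage sequence via the coercivity characterization of $\bar K$-nonsingularity and then pass to the limit by continuity and closedness of $\bar K$. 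Your approach is more self-contained, since it avoids the unproven auxiliary fact about cones over compact sets; the paper's approach, on the other hand, makes the geometric structure of ${\rm Tpos}(\bar K,\bar{\mathcal{A}})$ more visible. Incidentally, your sequential argument is very close in spirit to the paper's proof of the next lemma (Lemma~\ref{Mapclosed}), so the two methods are clearly compatible with the paper's framework.
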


\begin{proof}
We first prove that ${\rm Tpos}(\bar K,\bar{\mathcal{A}})$ is a cone, i.e., $t{\bf y}\in {\rm Tpos}(\bar K,\bar{\mathcal{A}})$ for any ${\bf y}\in {\rm Tpos}(\bar K,\bar{\mathcal{A}})$ and $t\in \mathbb{R}_+$. Since ${\bf y}\in {\rm Tpos}(\bar K,\bar{\mathcal{A}})$, there exists ${\bf x}\in\bar K$ such that ${\bf y}=\bar{\mathcal{A}}{\bf x}^{m-1}$, which implies that $t{\bf y}=\bar{\mathcal{A}}(t^{\frac{1}{m-1}}{\bf x})^{m-1}\in {\rm Tpos}(\bar K,\bar{\mathcal{A}})$ since $t^{\frac{1}{m-1}}{\bf x}\in \bar K$.

We now prove that ${\rm Tpos}(\bar K,\bar{\mathcal{A}})$ is closed. Note that the function $F$ defined by \eqref{tenx} is a compact map, which implies $F(\bar K\cap {\mathbb B}_n)$ is compact, since $\bar K\cap {\mathbb B}_n$ is a compact basis of $\bar K$. It is clear that $F(\bar K\cap {\mathbb B}_n)\subseteq{\rm Tpos}(\bar K,\bar{\mathcal{A}})$, which implies that ${\rm cone}(F(\bar K\cap {\mathbb B}_n))\subseteq{\rm Tpos}(\bar K,\bar{\mathcal{A}})$, since ${\rm Tpos}(\bar K,\bar{\mathcal{A}})$ is a cone.  Moreover, we claim that ${\rm Tpos}(\bar K,\bar{\mathcal{A}})={\rm cone}(F(\bar K\cap {\mathbb B}_n))$. In fact, for any ${\bf y}\in {\rm Tpos}(\bar K,\bar{\mathcal{A}})$, there exists ${\bf x}\in \bar K$ such that ${\bf y}=\bar{\mathcal{A}}{\bf x}^{m-1}$. If ${\bf x}={\bf 0}$, it is obvious that ${\bf y}={\bf 0}\in {\rm cone}(F(\bar K\cap {\mathbb B}_n))$. Without loss of generality, we assume that ${\bf x}\in \bar K\backslash\{{\bf 0}\}$. Let $\bar {\bf x}:={\bf x}/\|{\bf x}\|$, and then ${\bf x}=\|{\bf x}\|\bar {\bf x}$. Hence ${\bf y}=\|{\bf x}\|^{m-1}\mathcal{A}\bar {\bf x}^{m-1}\in {\rm cone}(F(\bar K\cap {\mathbb B}_n))$. Therefore, ${\rm Tpos}(\bar K,\bar{\mathcal{A}})={\rm cone}(F(\bar K\cap {\mathbb B}_n))$. Since $\bar{\mathcal{A}}$ is $\bar K$-nonsingular, it holds that ${\bf 0}\notin F(\bar K\cap {\mathbb B}_n)$ from ${\bf 0}\notin \bar K\cap{\mathbb B}_n$. We know, from the compactness of $F(\bar K\cap {\mathbb B}_n)$, that ${\rm Tpos}(\bar K,\bar{\mathcal{A}})$ is closed.
\qed\end{proof}

Notice that the $\bar K$-nonsingularity of $\bar {\mathcal{A}}$ is only a sufficient condition for the closedness of ${\rm Tpos}(\bar K,\bar{\mathcal{A}})$, which will be showed in the following example.
\begin{exam}
Let $\mathcal{A}=(a_{i_1i_2i_3})\in {\mathcal T}_{3,2}$, where $a_{112}=a_{121}=a_{221}=a_{212}=1$ and all others $a_{i_1i_2i_3}=0$. It is clear that $\mathcal{A}{\bf x}^2=2x_1x_2 (1,1)^\top$ for any ${\bf x}\in \mathbb{R}^2$, which means that $\mathcal{A}$ is $\mathbb{R}_+^2$-singular, since $\mathcal{A}{\bf x}^2={\bf 0}$ for ${\bf x}=(1,0)^\top$. However, we can see that ${\rm Tpos}(\mathbb{R}_+^2,\mathcal{A})=\{a(1,1)^\top~|~a\geq 0\}$ which is closed.
\end{exam}

In the rest of this section, we regard ${\rm Tpos}(\cdot,\cdot)$ as a set-valued map from $C(\mathbb{R}^n)\times {\mathcal T}_{m,n}$ into the power set of $\mathbb{R}^n$. The following lemma characterizes the closedness of the map ${\rm Tpos}(\cdot,\cdot)$.

\begin{lemma}\label{Mapclosed}
Let $(\bar K,\bar {\mathcal{A}})\in C(\mathbb{R}^n)\times {\mathcal T}_{m,n}$. If $\bar {\mathcal{A}}$ is $\bar K$-nonsingular, then the map ${\rm Tpos}(\cdot,\cdot)$ is closed at $(\bar K,\bar {\mathcal{A}})$.
\end{lemma}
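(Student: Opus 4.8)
The plan is to unwind the definition of closedness of the set-valued map at the point $(\bar K,\bar{\mathcal{A}})$: I fix an arbitrary sequence $\{(K_l,\mathcal{A}^l)\}\subset C(\mathbb{R}^n)\times {\mathcal T}_{m,n}$ with $(K_l,\mathcal{A}^l)\to(\bar K,\bar{\mathcal{A}})$ and points ${\bf y}^{(l)}\in {\rm Tpos}(K_l,\mathcal{A}^l)$ with ${\bf y}^{(l)}\to\bar{\bf y}$, and I must show $\bar{\bf y}\in {\rm Tpos}(\bar K,\bar{\mathcal{A}})$. By the definition of ${\rm Tpos}$, each ${\bf y}^{(l)}$ can be written as ${\bf y}^{(l)}=\mathcal{A}^l({\bf x}^{(l)})^{m-1}$ for some ${\bf x}^{(l)}\in K_l$. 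Since the map $({\bf x},\mathcal{A})\mapsto \mathcal{A}{\bf x}^{m-1}$ is jointly continuous (each component is a polynomial in the entries of $\mathcal{A}$ and the components of ${\bf x}$), the whole statement reduces to extracting from $\{{\bf x}^{(l)}\}$ a convergent subsequence whose limit $\bar{\bf x}$ lies in $\bar K$; then passing to the limit gives $\bar{\bf y}=\bar{\mathcal{A}}\bar{\bf x}^{m-1}\in {\rm Tpos}(\bar K,\bar{\mathcal{A}})$.

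The hard part, and the only place where $\bar K$-nonsingularity enters, is to prevent $\{{\bf x}^{(l)}\}$ from escaping to infinity. I would argue by contradiction. Suppose that, after passing to a subsequence which I do not relabel, $\|{\bf x}^{(l)}\|\to\infty$. Put $\hat{\bf x}^{(l)}:={\bf x}^{(l)}/\|{\bf x}^{(l)}\|\in K_l\cap {\mathbb B}_n$; by compactness of ${\mathbb B}_n$ I may assume $\hat{\bf x}^{(l)}\to\hat{\bf x}$ with $\|\hat{\bf x}\|=1$. Exactly as in the proof of Property \ref{SinglocalP}, the bound ${\rm dist}(\hat{\bf x}^{(l)},\bar K\cap {\mathbb B}_n)\le\delta(K_l,\bar K)\to 0$ supplied by the Hausdorff description \eqref{ddKK} forces $\hat{\bf x}\in\bar K\backslash\{{\bf 0}\}$. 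Homogeneity gives $\mathcal{A}^l(\hat{\bf x}^{(l)})^{m-1}={\bf y}^{(l)}/\|{\bf x}^{(l)}\|^{m-1}$, whose norm tends to $0$ because the numerator stays bounded (indeed ${\bf y}^{(l)}\to\bar{\bf y}$) while $\|{\bf x}^{(l)}\|^{m-1}\to\infty$. On the other hand, joint continuity together with $\mathcal{A}^l\to\bar{\mathcal{A}}$ and $\hat{\bf x}^{(l)}\to\hat{\bf x}$ yields $\mathcal{A}^l(\hat{\bf x}^{(l)})^{m-1}\to\bar{\mathcal{A}}\hat{\bf x}^{m-1}$. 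Hence $\bar{\mathcal{A}}\hat{\bf x}^{m-1}={\bf 0}$ with $\hat{\bf x}\in\bar K\backslash\{{\bf 0}\}$, contradicting the $\bar K$-nonsingularity of $\bar{\mathcal{A}}$ (Definition \ref{NonsingularDef}).

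With escape to infinity excluded, $\{{\bf x}^{(l)}\}$ is bounded, so some subsequence satisfies ${\bf x}^{(l_i)}\to\bar{\bf x}$. It remains to check $\bar{\bf x}\in\bar K$: if $\bar{\bf x}={\bf 0}$ this is immediate since $\bar K$ is a cone, and if $\bar{\bf x}\neq{\bf 0}$ I apply the normalized estimate above to ${\bf x}^{(l_i)}/\|{\bf x}^{(l_i)}\|\in K_{l_i}\cap {\mathbb B}_n$ to get $\bar{\bf x}/\|\bar{\bf x}\|\in\bar K$, and then multiply by the scalar $\|\bar{\bf x}\|$ using that $\bar K$ is a cone. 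Passing to the limit in ${\bf y}^{(l_i)}=\mathcal{A}^{l_i}({\bf x}^{(l_i)})^{m-1}$ finally gives $\bar{\bf y}=\bar{\mathcal{A}}\bar{\bf x}^{m-1}\in {\rm Tpos}(\bar K,\bar{\mathcal{A}})$, which establishes closedness of the map at $(\bar K,\bar{\mathcal{A}})$. I expect the boundedness/contradiction step to be the crux; the cone-membership facts are the same ones already exploited in Property \ref{SinglocalP}, and the final passage to the limit is pure continuity, so no further obstacle is anticipated.
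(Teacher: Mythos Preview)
Your proof is correct and follows essentially the same approach as the paper. Both arguments normalize the preimages ${\bf x}^{(l)}$, use the Hausdorff description \eqref{ddKK} to push limits of unit vectors from $K_l$ into $\bar K$, and invoke $\bar K$-nonsingularity of $\bar{\mathcal{A}}$ to rule out degeneration; the only organizational difference is that you isolate boundedness via a separate contradiction step and then extract a convergent subsequence of the ${\bf x}^{(l)}$ themselves, whereas the paper keeps the normalized vectors throughout and deduces directly from ${\bf y}^{(l)}=\|{\bf x}^{(l)}\|^{m-1}\mathcal{A}^l({\bf z}^{(l)})^{m-1}$ that $\|{\bf x}^{(l)}\|$ converges to $(\|\bar{\bf y}\|/\|\bar{\mathcal{A}}\bar{\bf z}^{m-1}\|)^{1/(m-1)}$.
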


\begin{proof}
Take any sequences $\{(K_l,\mathcal{A}^l)\}\subseteq C(\mathbb{R}^n)\times {\mathcal T}_{m,n}$ and ${\bf y}^{(l)}\in {\rm Tpos}(K_l,\mathcal{A}^l)$ satisfying $(K_l,\mathcal{A}^l)\rightarrow (\bar K,\bar{\mathcal{A}})$ and ${\bf y}^{(l)}\rightarrow\bar {\bf y}$, respectively. To prove the colsedness of ${\rm Tpos}(\cdot,\cdot)$ at $(\bar K,\bar {\mathcal{A}})$, we need to prove $\bar {\bf y}\in {\rm Tpos}(\bar K,\bar{\mathcal{A}})$.

For every $l$, since ${\bf y}^{(l)}\in {\rm Tpos}(K_l,\mathcal{A}^l)$, there exists ${\bf x}^{(l)}\in K_l$ such that ${\bf y}^{(l)}=\mathcal{A}^l({\bf x}^{(l)})^{m-1}$.
Without loss of generality, we assume ${\bf x}^{(l)}\neq {\bf 0}$ for all $l$. Let ${\bf z}^{(l)}:={\bf x}^{(l)}/\|{\bf x}^{(l)}\|$. Then we have
\begin{equation}\label{kkxyK}
{\bf y}^{(l)}=\|{\bf x}^{(l)}\|^{m-1}\mathcal{A}^l({\bf z}^{(l)})^{m-1}.
\end{equation}
It can be easily seen that ${\bf z}^{(l)}\in K_l\cap {\mathbb B}_n$ for every $l$. Since $\delta(K_l,\bar K)\rightarrow 0$ as $l\rightarrow\infty$, by (\ref{ddKK}), we know that ${\rm dist}({\bf z}^{(l)},\bar K\cap {\mathbb B}_n)\rightarrow 0$. Since $\{{\bf z}^{(l)}\}\subseteq {\mathbb B}_n$, without loss of generality, we assume ${\bf z}^{(l)}\rightarrow \bar {\bf z}$ as $l\rightarrow\infty$, which, together with the closedness of $\bar K\cap {\mathbb B}_n$, implies that $\bar {\bf z}\in \bar K\cap {\mathbb B}_n$. Thanks to $\mathcal{A}^l\rightarrow \bar{\mathcal{A}}$ and ${\bf z}^{(l)}\rightarrow \bar {\bf z}$ as $l\rightarrow\infty$, we have $\mathcal{A}^l({\bf z}^{(l)})^{m-1}\rightarrow\bar{\mathcal{A}}\bar {\bf z}^{m-1}$. It follows from the $\bar K$-nonsingularity of $\bar{\mathcal A}$ that $\bar {\mathcal{A}}\bar {\bf z}^{m-1}\neq {\bf 0}$ since $\bar {\bf z} \in \bar K\backslash \{{\bf 0}\}$. Moreover, since ${\bf y}^{(l)}\rightarrow\bar {\bf y}$ and $\mathcal{A}^l({\bf z}^{(l)})^{m-1}\rightarrow\bar{\mathcal{A}}\bar {\bf z}^{m-1}$, by (\ref{kkxyK}), we know that $\|{\bf x}^{(l)}\|\rightarrow \bar t:=(\|\bar {\bf y}\|/\|\bar {\mathcal{A}}\bar {\bf z}^{m-1}\|)^{\frac{1}{m-1}}\in \mathbb{R}_+$. Therefore, by invoking (\ref{kkxyK}) again, we have $\bar {\bf y}=\bar t^{m-1}\bar{\mathcal{A}}\bar {\bf z}^{m-1}=\bar{\mathcal{A}}(\bar t\bar {\bf z})^{m-1}$ with $\bar t\bar {\bf z}\in \bar K$, which means $\bar {\bf y}\in {\rm Tpos}(\bar K,\bar{\mathcal{A}})$. The desired conclusions follow.
\qed\end{proof}

Indeed, the $\bar K$-nonsingularity of $\bar{\mathcal A}$ is a key condition for Lemma \ref{Mapclosed}. Below, we give an example to show that the $\bar K$-nonsingularity of $\bar {\mathcal{A}}$ is necessary to ensure the closedness of the map ${\rm Tpos}$ at $(\bar K,\bar {\mathcal{A}})$, even for a very special case.

% \begin{exam}
 %Consider the closedness of the map ${\rm Tpos}_{\mathbb{R}_+^n}$ at $\bar {\mathcal{A}}=\mathcal{O}\in \mathcal{\mathcal T}_{m,n}$, %where $\mathcal{O}$ is a zero tensor. Take $\mathcal{A}_k=(1/k)\mathcal{I}$ with $k=1,2,\ldots$. It is easy to see that ${\rm %Tpos}_{\mathbb{R}_+^n}(\mathcal{O})=\{0\}$ and ${\rm Tpos}_{\mathbb{R}_+^n}(\mathcal{A}_k)=\mathbb{R}_+^n$ for all $k$. Take %$y^{(k)}=(1+1/k,1+1/k,\ldots,1+1/k)^\top$. Then it is clear that $\mathcal{A}_k\rightarrow \mathcal{O}$, $y^{(k)}\in {\rm %Tpos}_{\mathbb{R}_+^n}(\mathcal{A}_k)$ and $y^{(k)}\rightarrow e_n$. But $e_n\not\in {\rm %Tpos}_{\mathbb{R}_+^n}(\mathcal{O})$, which means that the map ${\rm Tpos}_{\mathbb{R}_+^n}$ is not closed at $\mathcal{O}$.
 %\end{exam}

\begin{exam}
Let $\bar{\mathcal{A}}=\left[
\begin{array}{cc}
1&-2\\
1&-2
\end{array}
\right]\in {\mathcal T}_{2,2}$. It is clear that $\bar {\mathcal{A}}$ is $\mathbb{R}_+^2$-singular.  To verify the closedness of the map ${\rm Tpos}$ at $({\mathbb{R}_+^2},\bar{\mathcal{A}})$, we consider the sequence $\{\mathcal{A}^l\}$ defined by $$\mathcal{A}^l=\left[
\begin{array}{cccc}
1&&&-2-\frac{1}{l}\\
1&&&-2
\end{array}
\right]\in {\mathcal T}_{2,2},~~\forall~l=1,2,\ldots.$$ It is clear that $\mathcal{A}^l\rightarrow\bar{\mathcal{A}}$ as $l\rightarrow\infty$. Moreover, we know that $(1,2)\in {\rm Tpos}({\mathbb{R}_+^2},\mathcal{A}^l)$ must hold for all $l$, because for every $l$ there exists ${\bf x}^{(l)}:=(2+2l,l)\in \mathbb{R}_+^2$ such that $\mathcal{A}^l{\bf x}^{(l)}=(1,2)$. On the other hand, we see that ${\rm Tpos}({\mathbb{R}_+^2},\bar {\mathcal{A}})=\{a{\bf e}_2~|~a\in \mathbb{R}\}$. It is clear that $(1,2)\not\in {\rm Tpos}({\mathbb{R}_+^2},\bar {\mathcal{A}})$, which means that the map ${\rm Tpos}(\cdot,\cdot)$ is not closed at $({\mathbb{R}_+^2},\bar {\mathcal{A}})$.
\end{exam}

Now we introduce an important concept, which is an extension of the one defined in \cite[Def. 1.3.2]{CPS92}.
\begin{definition}\label{Compten}
Given $\mathcal{A}\in {\mathcal T}_{m,n}$ and $\alpha\subseteq I_n$, we define $\mathcal{C}_{\mathcal{A}}(\alpha)\in {\mathcal T}_{m,n}$ as
\begin{equation}\label{CAalij}
(\mathcal{C}_{\mathcal{A}}(\alpha))_{i_1i_2\ldots i_m}=\left\{
\begin{array}{ll}
-a_{i_1i_2\ldots i_m},&~{\rm if~}i_1\in I_n,i_2,\ldots,i_m\in \alpha,\\
\sigma_{i_1i_2\ldots i_m},&~{\rm if~}i_1,i_2,\ldots,i_m\not\in \alpha,\\
0,&~{\rm otherwise}.
\end{array}
\right.
\end{equation}
$\mathcal{C}_{\mathcal{A}}(\alpha)$ is then called a {\it complementary tensor} of $\mathcal{A}$, where $\sigma_{i_1i_2\ldots i_m}$ is given in the unit tensor ${\mathcal I}$. The associated cone, ${\rm Tpos}(K,\mathcal{C}_\mathcal{A}(\alpha))$, is called a {\it complementary cone}.
\end{definition}

\begin{pro}\label{property1}
Let $\mathcal{A}\in {\mathcal T}_{m,n}$ and $\alpha\subseteq I_n$. If $\alpha=\emptyset$, then $\mathcal{C}_\mathcal{A}(\alpha)=\mathcal{I}$. If $\alpha\neq\emptyset$ and $\mathcal{A}_\alpha$ is $\mathbb{R}_+^{|\alpha|}$-nonsingular, then the complementary tensor $\mathcal{C}_\mathcal{A}(\alpha)$ of $\mathcal{A}$ is $\mathbb{R}_+^n$-nonsingular.
\end{pro}

\begin{proof}
When $\alpha=\emptyset$, by Definition \ref{Compten}, it is obvious that $\mathcal{C}_\mathcal{A}(\alpha)=\mathcal{I}$. We now  consider the case where $\alpha\neq\emptyset$. Take any ${\bf u}:=({\bf u}_\alpha,{\bf u}_{\bar \alpha})\in \mathbb{R}^n_+$, where $\bar \alpha=I_n\backslash \alpha$. It can be easily seen that
\begin{equation}\label{CAu}
\mathcal{C}_\mathcal{A}(\alpha){\bf u}^{m-1}=\left[
\begin{array}{c}
-\mathcal{A}_\alpha({\bf u}_\alpha)^{m-1}\\
-\mathcal{A}_{\bar\alpha\alpha}({\bf u}_\alpha)^{m-1}+({\bf u}_{\bar\alpha})^{[m-1]}
\end{array}
\right].
\end{equation}
If $\mathcal{C}_\mathcal{A}(\alpha){\bf u}^{m-1}={\bf 0}$, then $\mathcal{A}_\alpha({\bf u}_\alpha)^{m-1}={\bf 0}$ by (\ref{CAu}). Consequently, by the $\mathbb{R}_+^{|\alpha|}$-nonsingularity of $\mathcal{A}_\alpha$, it holds that ${\bf u}_\alpha={\bf 0}$. Moreover, it follows from (\ref{CAu}) that ${\bf u}_{\bar \alpha}={\bf 0}$. Therefore, $\mathcal{C}_\mathcal{A}(\alpha)$ is $\mathbb{R}_+^n$-nonsingular.
\qed\end{proof}

\section{Existence of Solutions}\label{Existence}

The properties of solutions of TCPs have been investigated under certain conditions in earlier papers. In this section, we still study along this line and in particular show more interesting results of the TCP on a nonnegative cone, that is, we prove that TCP(${\mathbb R}^n_+,{\bf q},{\mathcal A}$) has a solution under comparatively weaker conditions.

Here, we first make the following assumption.
\begin{assum} \label{KsinAssum}
Let $(K,\mathcal{A})\in C(\mathbb{R}^n)\times{\mathcal T}_{m,n}$. For every nonempty subset $\alpha$ of $I_n$, ${\rm Tpos}(K,C_{\mathcal{A}}(\alpha))$ is closed.
\end{assum}

From Lemma \ref{Closed} and Property \ref{property1}, we know that, if $\mathcal{A}_{\alpha}$ is $\mathbb{R}_+^{|\alpha|}$-nonsingular for the subset $\alpha$ of $I_n$, then ${\rm Tpos}(\mathbb{R}_+^n,C_{\mathcal{A}}(\alpha))$ is closed. Notice that if $\mathcal{A}$ is strictly copositive, then $\mathcal{A}_{\alpha}$ is  $\mathbb{R}_+^{|\alpha|}$-nonsingular for every nonempty subset $\alpha$ of $I_n$. The following example shows that even if $\mathcal{A}\in {\mathcal T}_{m,n}$ is copositive and $\mathcal{A}_{\alpha}$ is  $\mathbb{R}_+^{|\alpha|}$-nonsingular for every nonempty subset $\alpha$ of $I_n$, it is not necessarily strictly copositive.

\begin{exam}
Let $\mathcal{A}=(a_{i_1i_2i_3})\in {\mathcal T}_{3,2}$, where $a_{111}=a_{222}=1$, $a_{112}=a_{122}=-1$ and others are zeros. Then for any ${\bf x}\in \mathbb{R}^2$, we have $\mathcal{A}{\bf x}^2=(x_1^2-x_1x_2-x^2_2,x^2_2)$ and $\mathcal{A}{\bf x}^3=(x_1+x_2)(x_1-x_2)^2$. Consequently, it is easy to verify that $\mathcal{A}_\alpha$ is nonsingular for every nonempty subset $\alpha\subseteq\{1,2\}$ and $\mathcal{A}$ is copositive, but not strictly copositive.
\end{exam}

The above example efficiently shows that the combination of $\mathbb{R}^{|\alpha|}_+$-nonsingularity and copositivity is relatively weaker than the strict copositivity. For given $K\in C(\mathbb{R}^n)$ and $\mathcal{A}\in {\mathcal T}_{m,n}$, we now denote
\[\label{Qdef}Q(K,\mathcal{A}):=\left\{{\bf q}\in \mathbb{R}^n~|~{\rm SOL}(K,{\bf q},\mathcal{A})\neq \emptyset\right\}.\]

\begin{proposition}\label{ClosedQ}
For given $K\in C(\mathbb{R}^n)$ and $\mathcal{A}\in {\mathcal T}_{m,n}$. Then, $Q(K,\mathcal{A})$ given by \eqref{Qdef} is a cone. In particular, if ${\mathcal A}$ satisfies Assumption \ref{KsinAssum} with $K=\mathbb{R}_+^n$, then $Q(\mathbb{R}_+^n,\mathcal{A})$ is a closed cone.
\end{proposition}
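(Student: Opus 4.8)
The plan is to prove the two assertions separately: first the cone property for an arbitrary $K$, which is a short homogeneity argument, and then the closedness over $\mathbb{R}_+^n$, which I would reduce to the statement that $Q(\mathbb{R}_+^n,\mathcal{A})$ is a finite union of complementary cones. For the cone property, the key observation is that each component of $\mathcal{A}{\bf x}^{m-1}$ is homogeneous of degree $m-1$. Given ${\bf q}\in Q(K,\mathcal{A})$ with a witness ${\bf x}\in{\rm SOL}(K,{\bf q},\mathcal{A})$ and any $t>0$, I would set ${\bf y}:=t^{1/(m-1)}{\bf x}\in K$; then $\mathcal{A}{\bf y}^{m-1}=t\mathcal{A}{\bf x}^{m-1}$, so $\mathcal{A}{\bf y}^{m-1}+t{\bf q}=t(\mathcal{A}{\bf x}^{m-1}+{\bf q})\in K^*$ because $K^*$ is a cone, and $\langle{\bf y},\mathcal{A}{\bf y}^{m-1}+t{\bf q}\rangle=t^{1/(m-1)}t\langle{\bf x},\mathcal{A}{\bf x}^{m-1}+{\bf q}\rangle=0$. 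Hence $t{\bf q}\in Q(K,\mathcal{A})$. Since ${\bf x}={\bf 0}$ solves ${\rm TCP}(K,{\bf 0},\mathcal{A})$, also ${\bf 0}\in Q(K,\mathcal{A})$, and the cone property follows.

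The heart of the closedness claim is the identity
$$Q(\mathbb{R}_+^n,\mathcal{A})=\bigcup_{\alpha\subseteq I_n}{\rm Tpos}(\mathbb{R}_+^n,\mathcal{C}_\mathcal{A}(\alpha)),$$
which I would establish by double inclusion using \eqref{CAu}. For $\supseteq$: if ${\bf q}=\mathcal{C}_\mathcal{A}(\alpha){\bf u}^{m-1}$ with ${\bf u}=({\bf u}_\alpha,{\bf u}_{\bar\alpha})\in\mathbb{R}_+^n$, then ${\bf x}:=({\bf u}_\alpha,{\bf 0})\in\mathbb{R}_+^n$ and, by \eqref{CAu}, ${\bf w}=\mathcal{A}{\bf x}^{m-1}+{\bf q}=({\bf 0},({\bf u}_{\bar\alpha})^{[m-1]})\geq{\bf 0}$ with $\langle{\bf x},{\bf w}\rangle=0$, so ${\bf q}\in Q(\mathbb{R}_+^n,\mathcal{A})$. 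For $\subseteq$: given a solution ${\bf x}$, set $\alpha:=\{i\in I_n:x_i>0\}$; then ${\bf x}_{\bar\alpha}={\bf 0}$, and the componentwise complementarity forces ${\bf w}_\alpha={\bf 0}$ while ${\bf w}_{\bar\alpha}\geq{\bf 0}$. Taking ${\bf u}_\alpha:={\bf x}_\alpha$ and ${\bf u}_{\bar\alpha}:=({\bf w}_{\bar\alpha})^{[1/(m-1)]}\geq{\bf 0}$ (the nonnegativity of ${\bf w}_{\bar\alpha}$ together with $m\geq2$ legitimizes the componentwise $(m-1)$-th root), the two blocks of \eqref{CAu} reproduce exactly ${\bf q}_\alpha=-\mathcal{A}_\alpha({\bf x}_\alpha)^{m-1}$ and ${\bf q}_{\bar\alpha}={\bf w}_{\bar\alpha}-\mathcal{A}_{\bar\alpha\alpha}({\bf x}_\alpha)^{m-1}$, whence ${\bf q}=\mathcal{C}_\mathcal{A}(\alpha){\bf u}^{m-1}\in{\rm Tpos}(\mathbb{R}_+^n,\mathcal{C}_\mathcal{A}(\alpha))$. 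The case $\alpha=\emptyset$ is consistent, since $\mathcal{C}_\mathcal{A}(\emptyset)=\mathcal{I}$ by Property \ref{property1} and ${\rm Tpos}(\mathbb{R}_+^n,\mathcal{I})=\mathbb{R}_+^n$.

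With this identity in hand the topological conclusion is immediate: under Assumption \ref{KsinAssum} every ${\rm Tpos}(\mathbb{R}_+^n,\mathcal{C}_\mathcal{A}(\alpha))$ with $\alpha\neq\emptyset$ is closed, and for $\alpha=\emptyset$ the set $\mathbb{R}_+^n$ is closed, so $Q(\mathbb{R}_+^n,\mathcal{A})$ is a finite (at most $2^n$) union of closed sets and therefore closed; combined with the first part it is a closed cone. I expect the main obstacle to be the union identity rather than the final step—specifically, setting up the correspondence between the support pattern of a solution and the index set $\alpha$, and recognizing that the $(m-1)$-th root device is precisely what absorbs the nonnegative slack ${\bf w}_{\bar\alpha}$ into the $\bar\alpha$-block of \eqref{CAu}. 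Once this algebraic bookkeeping is verified in both directions, the closedness follows with no further analysis.
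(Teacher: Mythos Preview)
Your proposal is correct and follows essentially the same route as the paper: you prove the cone property by the homogeneity substitution ${\bf y}=t^{1/(m-1)}{\bf x}$, establish the identity $Q(\mathbb{R}_+^n,\mathcal{A})=\bigcup_{\alpha\subseteq I_n}{\rm Tpos}(\mathbb{R}_+^n,\mathcal{C}_\mathcal{A}(\alpha))$ by the same double inclusion (support set $\alpha$ and the $(m-1)$-th root trick for the slack), and conclude closedness as a finite union. Your explicit treatment of the $\alpha=\emptyset$ case, noting that ${\rm Tpos}(\mathbb{R}_+^n,\mathcal{I})=\mathbb{R}_+^n$ is closed, is in fact slightly more careful than the paper, since Assumption~\ref{KsinAssum} only covers nonempty $\alpha$.
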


\begin{proof}
We first prove that $Q(K,\mathcal{A})$ is a cone, i.e., ${\bf q}\in Q(K,\mathcal{A})$ and $t\in \mathbb{R}_+$ implies $t{\bf q}\in Q(K,\mathcal{A})$. When $t=0$ or ${\bf q}={\bf 0}$, the conclusion is obvious since $t{\bf q}={\bf 0}\in Q(K,\mathcal{A})$. We now assume ${\bf q}\neq {\bf 0}$ and $t>0$. Since ${\bf q}\in Q(K,\mathcal{A})$, we know that ${\rm SOL}(K,{\bf q},\mathcal{A})\neq \emptyset$. Take ${\bf x}\in{\rm SOL}(K,{\bf q},\mathcal{A})$. Then, it is easy to see that $t^{\frac{1}{m-1}}{\bf x}\in{\rm SOL}(K,t{\bf q},\mathcal{A})$, and hence $t{\bf q}\in Q(K,\mathcal{A})$.

We now prove the closedness of $Q(\mathbb{R}_+^n,\mathcal{A})$. To this end, we first prove
\begin{equation}\label{QACpos}Q(\mathbb{R}_+^n,\mathcal{A})=\bigcup_{\alpha\subseteq I_n}{\rm Tpos}({\mathbb{R}_+^n},C_{\mathcal{A}}(\alpha)).
 \end{equation}
 For any ${\bf q}\in {\rm Tpos}({\mathbb{R}_+^n},C_{\mathcal{A}}(\alpha))$ with some $\alpha\subseteq I_n$, there exists ${\bf u}=({\bf u}_\alpha,{\bf u}_{\bar \alpha})\in \mathbb{R}_+^n$ such that ${\bf q}=C_{\mathcal{A}}(\alpha){\bf u}^{m-1}$, that is,
$$
\left\{
\begin{array}{l}
{\bf q}_{\alpha}=-\mathcal{A}_{\alpha}({\bf u}_\alpha)^{m-1},\\
{\bf q}_{\bar\alpha}=-\mathcal{A}_{\bar\alpha\alpha}({\bf u}_\alpha)^{m-1}+({\bf u}_{\bar\alpha})^{[m-1]},\\
\end{array}
\right.
$$
and hence
$$
\left\{
\begin{array}{l}
\mathcal{A}_{\alpha}({\bf u}_\alpha)^{m-1}+{\bf q}_{\alpha}={\bf 0},\\
\mathcal{A}_{\bar\alpha\alpha}({\bf u}_\alpha)^{m-1}+{\bf q}_{\bar\alpha}=({\bf u}_{\bar\alpha})^{[m-1]}.
\end{array}
\right.
$$
This means that ${\bf x}=({\bf u}_\alpha,{\bf 0})\in {\rm SOL}(\mathbb{R}_+^n,{\bf q},\mathcal{A})$, and hence ${\bf q}\in Q(\mathbb{R}_+^n,\mathcal{A})$. Therefore, we have
$$\bigcup_{\alpha\subseteq I_n}{\rm Tpos}({\mathbb{R}_+^n},C_{\mathcal{A}}(\alpha))\subseteq Q(\mathbb{R}_+^n,\mathcal{A}).$$
Conversely, for every ${\bf q}\in Q(\mathbb{R}_+^n,\mathcal{A})$, there exists ${\bf x}\in\mathbb{R}^n$ such that
\begin{equation}\label{RREt}
{\bf x}\geq {\bf 0},~\mathcal{A}{\bf x}^{m-1}+{\bf q}\geq {\bf 0}~~{\rm and}~~{\bf x}^\top (\mathcal{A}{\bf x}^{m-1}+{\bf q})=0.
\end{equation}
If ${\bf x=0}$, then ${\bf q}\geq {\bf 0}$ by (\ref{RREt}). Consequently, we have ${\bf q}\in{\rm Tpos}({\mathbb{R}_+^n},C_{\mathcal{A}}(\emptyset))$ since $C_{\mathcal{A}}(\emptyset)=\mathcal{I}$ by Property \ref{property1}. We consider the case of ${\bf x\neq 0}$. Denote $\alpha=\{i\in I_n~|~x_i>0\}$ and $\bar\alpha=I_n\backslash \alpha$. It then follows from (\ref{RREt}) that
$$
\left\{\begin{array}{l}
\mathcal{A}_\alpha({\bf x}_\alpha)^{m-1}+{\bf q}_\alpha={\bf 0},\\
{\bf y}_{\bar \alpha}:=\mathcal{A}_{\bar \alpha\alpha}({\bf x}_\alpha)^{m-1}+{\bf q}_{\bar\alpha}\geq{\bf 0},
\end{array}
\right.
$$
which implies
\begin{equation}\label{RREt1}
\left\{\begin{array}{l}
{\bf q}_\alpha=-\mathcal{A}_\alpha({\bf x}_\alpha)^{m-1},\\
{\bf q}_{\bar\alpha}=-\mathcal{A}_{\bar \alpha\alpha}({\bf x}_\alpha)^{m-1}+{\bf y}_{\bar \alpha}.
\end{array}
\right.
\end{equation}
For such $\alpha\subseteq I_n$, we define $C_{\mathcal{A}}(\alpha)$ by (\ref{CAalij}) and let ${\bf u}=({\bf x}_\alpha, ({\bf y}_{\bar \alpha})^{[\frac{1}{m-1}]})$. It is clear that ${\bf u}\in \mathbb{R}_+^n$. Moreover, it holds that ${\bf q}= C_{\mathcal{A}}(\alpha){\bf u}^{m-1}$, which implies that there exists a subset $\alpha$ of $I_n$ satisfying ${\bf q}\in {\rm Tpos}({\mathbb{R}_+^n},C_{\mathcal{A}}(\alpha))$. Therefore, (\ref{QACpos}) holds. Moreover, for a tensor $\mathcal{A}\in {\mathcal T}_{m,n}$, there are $2^n$ (not necessarily all distinct) ${\rm Tpos}(\mathbb{R}_+^n,C_{\mathcal{A}}(\alpha))$, which, together with Assumption \ref{KsinAssum} with $K=\mathbb{R}_+^n$, implies that $Q(\mathbb{R}_+^n,\mathcal{A})$ is closed.
\qed\end{proof}

\begin{proposition}\label{prop1}
Let ${\bf q}\in \mathbb{R}^n$ and $\mathcal{A}\in{\mathcal T}_{m,n}$. If ${\mathcal A}$ satisfies Assumption \ref{KsinAssum} with $K=\mathbb{R}_+^n$ and ${\rm SOL}(\mathbb{R}_+^n, {\bf q},\mathcal{A})=\emptyset$, then there exists a neighborhood $V$ of ${\bf q}$ such that ${\rm SOL}(\mathbb{R}_+^n,{\bf q}^\prime,\mathcal{A})=\emptyset$ for any ${\bf q}^\prime\in V$.
\end{proposition}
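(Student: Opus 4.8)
The plan is to recognize this statement as an immediate topological consequence of the closedness of the set $Q(\mathbb{R}_+^n,\mathcal{A})$ that was already established in Proposition \ref{ClosedQ}. First I would reformulate both the hypothesis and the conclusion in terms of $Q(\mathbb{R}_+^n,\mathcal{A})$: by the very definition \eqref{Qdef}, the assumption ${\rm SOL}(\mathbb{R}_+^n,{\bf q},\mathcal{A})=\emptyset$ is exactly the statement that ${\bf q}\notin Q(\mathbb{R}_+^n,\mathcal{A})$, while the desired conclusion---that ${\rm SOL}(\mathbb{R}_+^n,{\bf q}^\prime,\mathcal{A})=\emptyset$ for every ${\bf q}^\prime$ in some neighborhood $V$ of ${\bf q}$---is precisely the assertion that $V\cap Q(\mathbb{R}_+^n,\mathcal{A})=\emptyset$, i.e.\ that ${\bf q}$ is an interior point of the complement of $Q(\mathbb{R}_+^n,\mathcal{A})$.

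Next, since $\mathcal{A}$ satisfies Assumption \ref{KsinAssum} with $K=\mathbb{R}_+^n$, Proposition \ref{ClosedQ} guarantees that $Q(\mathbb{R}_+^n,\mathcal{A})$ is a closed cone in $\mathbb{R}^n$, and hence its complement $\mathbb{R}^n\backslash Q(\mathbb{R}_+^n,\mathcal{A})$ is open. Because ${\bf q}$ lies in this open complement, there exists $r>0$ such that the ball $\mathbb{B}_n({\bf q},r)$ is contained in $\mathbb{R}^n\backslash Q(\mathbb{R}_+^n,\mathcal{A})$; I would take $V:=\mathbb{B}_n({\bf q},r)$ to be the required neighborhood. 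Then for every ${\bf q}^\prime\in V$ we have ${\bf q}^\prime\notin Q(\mathbb{R}_+^n,\mathcal{A})$, which by \eqref{Qdef} means exactly ${\rm SOL}(\mathbb{R}_+^n,{\bf q}^\prime,\mathcal{A})=\emptyset$, as desired.

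I do not anticipate any genuine obstacle, since all of the analytic work---the finite-union decomposition \eqref{QACpos} of $Q(\mathbb{R}_+^n,\mathcal{A})$ into the complementary cones ${\rm Tpos}(\mathbb{R}_+^n,C_{\mathcal{A}}(\alpha))$ and the use of Assumption \ref{KsinAssum} to conclude closedness---has already been carried out in the proof of Proposition \ref{ClosedQ}. The present statement is a clean corollary: the closedness of $Q(\mathbb{R}_+^n,\mathcal{A})$ is equivalent to the openness of its complement, which is precisely the stability-of-infeasibility property claimed here. The only point worth stating carefully is the equivalence between emptiness of the solution set and non-membership in $Q(\mathbb{R}_+^n,\mathcal{A})$, and this is immediate from the definition \eqref{Qdef}.
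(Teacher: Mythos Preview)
Your proposal is correct and follows exactly the same approach as the paper: reformulate the hypothesis as ${\bf q}\notin Q(\mathbb{R}_+^n,\mathcal{A})$, invoke Proposition~\ref{ClosedQ} to get closedness of $Q(\mathbb{R}_+^n,\mathcal{A})$, and take $V$ to be a neighborhood of ${\bf q}$ disjoint from this closed set. The paper's proof is in fact slightly terser than yours, omitting the explicit choice of a ball, but the logic is identical.
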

\begin{proof}
Since ${\rm SOL}(\mathbb{R}_+^n,{\bf q},\mathcal{A})=\emptyset$, it holds that ${\bf q}\not\in Q(\mathbb{R}_+^n,\mathcal{A})$. From Proposition \ref{ClosedQ}, we know $Q(\mathbb{R}_+^n,\mathcal{A})$ is closed, which means there exists  a neighborhood $V$ of ${\bf q}$, such that $V\cap Q(\mathbb{R}_+^n,\mathcal{A})=\emptyset$, and hence the desired result follows.
\qed\end{proof}

Let $\mathcal{A}=(a_{i_1i_2\ldots i_m})\in{\mathcal T}_{m,n}$. It is easy to see that ${\rm SOL}(\mathbb{R}_+^n,{\bf 0},\mathcal{A})\neq\emptyset$. If TCP($\mathbb{R}_+^{n},{\bf q}, {\mathcal{A}}$) always has a solution for any ${\bf q}\in{\mathbb R}^n$, then we call ${\mathcal A}$ a Q-tensor. It has been well documented in \cite{BHW16,CQW16,DLQ15,GLQX15,SQ14,WHB16} that strictly semi-positive tensors, P-tensors, strictly copositive tensors, ER-tensors, positive tensors, and R-tensors are Q-tensors. If ${\mathcal A}$ is a strong M-tensor satisfying $a_{i_1\cdots i_m}=0$ whenever $i_j\neq i_k$ for some $j\neq k$, Gowda et al. \cite{GLQX15} proved that ${\mathcal A}$ is also a Q-tensor. If a symmetric tensor is copositive, then such a tensor is semi-positive. Correspondingly, TCP($\mathbb{R}_+^{n},{\bf q}, {\mathcal{A}}$) with a semi-positive ${\mathcal A}$ has a unique solution for all ${\bf q}>0$ (see \cite{SQ16}). When the underlying ${\mathcal A}$ is a semi-positive R$_0$-tensor, the authors of \cite{SQ14} proved that ${\mathcal A}$ must be an R-tensor. Consequently, TCP($\mathbb{R}_+^{n},{\bf q}, {\mathcal{A}}$) has a solution for any ${\bf q}\in{\mathbb R}^n$. In addition, it has been proved in \cite{HSW15} that a nonnegative tensor is also an R-tensor. When we consider a special case of TCP($\mathbb{R}_+^{n},{\bf q}, {\mathcal{A}}$) with a second-order tensor (i.e., ${\mathcal A}$ is a matrix), it is known from \cite{CPS92} that if ${\mathcal A}$ is copositive, then, for all ${\bf q}\in{\mathbb R}^n$ with the following property
$$\left[\;{\bf v}\geq 0,\; {\mathcal A}{\bf v}\geq 0,\;{\bf v}^\top {\mathcal A}{\bf v}=0 \;\right]\;\Rightarrow \;{\bf v}^\top {\bf q}\geq 0,$$
TCP($\mathbb{R}_+^{n},{\bf q}, {\mathcal{A}}$) (which is actually an LCP) has a solution. Inspired by such a result, we are further concerned with the solution existence of TCPs ($m\geq 3$) with copositive tensors.
%It can be easily seen that ${\bf q}\in {\rm int}(S_{\mathcal{A}}^{*})$ if and only if
%\begin{equation}\label{HHYU}
%\left[ {\bf 0}\neq {\bf v}\geq{\bf 0},~\mathcal{A}{\bf v}^{m-1}\geq{\bf 0},~~{\rm and}~~\mathcal{A}{\bf v}^{m}=0\right]\Rightarrow\left[{\bf v}^{\top}{\bf q}>0 \right].
%\end{equation}

To simplify the notations, let $S_{\mathcal{A}}:={\rm SOL}(\mathbb{R}_+^n,{\bf 0},\mathcal{A})$. It is clear that $S_{\mathcal{A}}$ is a cone and $S_{\mathcal{A}}\subseteq \mathbb{R}_+^n$. Consequently, it holds that $\mathbb{R}_+^n\subseteq S_{\mathcal{A}}^*$, which implies that ${\rm int}(S_{\mathcal{A}}^*)$ contains $\mathbb{R}^n_{++}$. For a class of multivalued functions with the ``upper limiting homogeneity" (ULH) property, Gowda and Pang \cite{GP92} derived the existence of a solution to the corresponding multivalued complementarity problems. According to their result, we have the following theorem.
%we state one of the main theorems without assuming that $\mathcal{A}$ is a strictly copositive tensor.
\begin{theorem}\label{CopExistence}
Let $\bar{\mathcal{A}}\in {\mathcal T}_{m,n}$ be given. Suppose that $\bar{\mathcal{A}}$ is copositive and satisfies Assumption \ref{KsinAssum} with $K=\mathbb{R}_+^n$. Then ${\rm TCP}(\mathbb{R}_+^n,{\bf q},\bar{\mathcal{A}})$ has a solution for any ${\bf q}\in S_{\bar{\mathcal{A}}}^*$. Moreover, ${\rm SOL}(\mathbb{R}_+^n,{\bf q},\bar{\mathcal{A}})$ is compact for any ${\bf q}\in {\rm int}(S_{\bar{\mathcal{A}}}^*)$
\end{theorem}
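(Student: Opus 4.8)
The plan is to split the two assertions according to whether ${\bf q}$ lies in the interior or on the boundary of $S_{\bar{\mathcal{A}}}^*$, reading the homogeneous polynomial map $F({\bf x})=\bar{\mathcal{A}}{\bf x}^{m-1}$ as its own recession map so that the homogeneous problem $\mathrm{TCP}(\mathbb{R}_+^n,{\bf 0},\bar{\mathcal{A}})$, whose solution cone is $S_{\bar{\mathcal{A}}}$, plays the role of the limiting complementarity problem in the Gowda--Pang ULH framework. Compatibility of ${\bf q}$ with this recession cone is exactly the condition ${\bf q}\in S_{\bar{\mathcal{A}}}^*$, and copositivity is what turns that compatibility into usable sign information. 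First I would record that $\mathrm{SOL}(\mathbb{R}_+^n,{\bf q},\bar{\mathcal{A}})$ is always closed, since it is cut out by the continuous conditions ${\bf x}\geq{\bf 0}$, $\bar{\mathcal{A}}{\bf x}^{m-1}+{\bf q}\geq{\bf 0}$ and $\langle{\bf x},\bar{\mathcal{A}}{\bf x}^{m-1}+{\bf q}\rangle=0$; hence the compactness claim reduces to boundedness.

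The crux is a boundedness lemma for interior data. Fix ${\bf q}\in\mathrm{int}(S_{\bar{\mathcal{A}}}^*)$ and suppose for contradiction a sequence ${\bf x}^{(k)}\in\mathrm{SOL}(\mathbb{R}_+^n,{\bf q},\bar{\mathcal{A}}+\epsilon_k\mathcal{I})$ with $\epsilon_k\to 0^+$ (the case $\epsilon_k\equiv 0$ being the compactness case) and $\|{\bf x}^{(k)}\|\to\infty$. Writing complementarity as $(\bar{\mathcal{A}}+\epsilon_k\mathcal{I})({\bf x}^{(k)})^{m}+\langle{\bf x}^{(k)},{\bf q}\rangle=0$ and using that both $\bar{\mathcal{A}}$ and $\epsilon_k\mathcal{I}$ are copositive yields $\langle{\bf x}^{(k)},{\bf q}\rangle\leq 0$. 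Passing to the normalized limit ${\bf z}^{(k)}={\bf x}^{(k)}/\|{\bf x}^{(k)}\|\to\bar{\bf z}$ (so $\bar{\bf z}\geq{\bf 0}$, $\|\bar{\bf z}\|=1$), dividing the feasibility and the complementarity relations by $\|{\bf x}^{(k)}\|^{m-1}$ and $\|{\bf x}^{(k)}\|^{m}$ respectively, and letting $k\to\infty$, I obtain $\bar{\mathcal{A}}\bar{\bf z}^{m-1}\geq{\bf 0}$ and $\bar{\mathcal{A}}\bar{\bf z}^{m}=0$, i.e. $\bar{\bf z}\in S_{\bar{\mathcal{A}}}\setminus\{{\bf 0}\}$, while $\langle\bar{\bf z},{\bf q}\rangle\leq 0$. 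Since $S_{\bar{\mathcal{A}}}^*$ is a closed convex cone with nonempty interior (it contains $\mathbb{R}_{++}^n$) and $\mathrm{cl}(\mathrm{conv}(S_{\bar{\mathcal{A}}}))\subseteq\mathbb{R}_+^n$ is pointed, membership ${\bf q}\in\mathrm{int}(S_{\bar{\mathcal{A}}}^*)$ forces $\langle\bar{\bf z},{\bf q}\rangle>0$, a contradiction. This one argument supplies the boundedness needed both for the perturbation limit and for the compactness claim.

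With the lemma in hand, existence for interior ${\bf q}$ follows by perturbation: for each $\epsilon>0$ the tensor $\bar{\mathcal{A}}+\epsilon\mathcal{I}$ is strictly copositive, since $(\bar{\mathcal{A}}+\epsilon\mathcal{I}){\bf x}^m\geq\epsilon\sum_i x_i^m>0$ on $\mathbb{R}_+^n\setminus\{{\bf 0}\}$, hence a Q-tensor, so $\mathrm{TCP}(\mathbb{R}_+^n,{\bf q},\bar{\mathcal{A}}+\epsilon\mathcal{I})$ admits a solution ${\bf x}^\epsilon$; the lemma keeps $\{{\bf x}^\epsilon\}$ bounded as $\epsilon\to 0$, and by continuity of $(\mathcal{A},{\bf x})\mapsto\mathcal{A}{\bf x}^{m-1}$ any limit point solves $\mathrm{TCP}(\mathbb{R}_+^n,{\bf q},\bar{\mathcal{A}})$. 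Thus $\mathrm{int}(S_{\bar{\mathcal{A}}}^*)\subseteq Q(\mathbb{R}_+^n,\bar{\mathcal{A}})$. To reach all of $S_{\bar{\mathcal{A}}}^*$ I invoke Proposition \ref{ClosedQ}: under Assumption \ref{KsinAssum} the set $Q(\mathbb{R}_+^n,\bar{\mathcal{A}})$ is closed, and since $S_{\bar{\mathcal{A}}}^*$ is closed convex with nonempty interior it equals $\mathrm{cl}(\mathrm{int}(S_{\bar{\mathcal{A}}}^*))$; taking closures gives $S_{\bar{\mathcal{A}}}^*\subseteq Q(\mathbb{R}_+^n,\bar{\mathcal{A}})$, which is precisely the existence statement. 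Finally, for ${\bf q}\in\mathrm{int}(S_{\bar{\mathcal{A}}}^*)$ the solution set is nonempty, closed, and bounded by the lemma, hence compact.

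The main obstacle is exactly the boundary of $S_{\bar{\mathcal{A}}}^*$: there the boundedness lemma breaks down (the limit direction $\bar{\bf z}$ yields only $\langle\bar{\bf z},{\bf q}\rangle=0$, not a contradiction), so the perturbed solutions may escape to infinity and no direct limiting argument produces a solution. This is where Assumption \ref{KsinAssum} is indispensable, as it is what makes $Q(\mathbb{R}_+^n,\bar{\mathcal{A}})$ closed and thereby transports existence from the interior to the boundary; correspondingly, compactness can only be asserted on the interior. A secondary point to check with care is the convex-analytic description of $\mathrm{int}(S_{\bar{\mathcal{A}}}^*)$ as $\{{\bf q}:\langle{\bf q},{\bf v}\rangle>0\ \forall\,{\bf v}\in\mathrm{cl}(\mathrm{conv}(S_{\bar{\mathcal{A}}}))\setminus\{{\bf 0}\}\}$, which rests on the pointedness inherited from $S_{\bar{\mathcal{A}}}\subseteq\mathbb{R}_+^n$.
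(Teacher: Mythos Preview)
Your proof is correct, and the overall architecture matches the paper's: establish existence and compactness for ${\bf q}\in{\rm int}(S_{\bar{\mathcal{A}}}^*)$, then push to the full $S_{\bar{\mathcal{A}}}^*$ by the closedness of $Q(\mathbb{R}_+^n,\bar{\mathcal{A}})$ from Proposition~\ref{ClosedQ}. The difference lies in how the interior case is handled. The paper treats $F({\bf x})=\bar{\mathcal{A}}{\bf x}^{m-1}$ as an ULH map of degree $m-1$ with $\Gamma=F$ and simply invokes Theorem~2 of Gowda--Pang \cite{GP92} as a black box to conclude nonemptiness and compactness of ${\rm SOL}(\mathbb{R}_+^n,{\bf q},\bar{\mathcal{A}})$ for interior ${\bf q}$. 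You instead give a self-contained argument: perturb to the strictly copositive tensor $\bar{\mathcal{A}}+\epsilon\mathcal{I}$ (a Q-tensor), produce solutions ${\bf x}^\epsilon$, and prove a unified boundedness lemma by the standard normalization-and-limit trick, extracting $\bar{\bf z}\in S_{\bar{\mathcal{A}}}\setminus\{{\bf 0}\}$ with $\langle\bar{\bf z},{\bf q}\rangle\leq 0$ to contradict ${\bf q}\in{\rm int}(S_{\bar{\mathcal{A}}}^*)$. Your route is more elementary and transparent---it essentially reproves the relevant special case of \cite{GP92} in situ---while the paper's is shorter but depends on the reader knowing the ULH machinery. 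Your explicit justification that $S_{\bar{\mathcal{A}}}^*={\rm cl}({\rm int}(S_{\bar{\mathcal{A}}}^*))$ via convexity and nonempty interior, and of the strict-positivity characterization of ${\rm int}(S_{\bar{\mathcal{A}}}^*)$ via the pointedness inherited from $S_{\bar{\mathcal{A}}}\subseteq\mathbb{R}_+^n$, fills in details the paper leaves implicit.
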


\begin{proof}
We first know that $Q(\mathbb{R}_+^n,\bar{\mathcal{A}})$ is closed, by Proposition \ref{ClosedQ}. We now take the multivalued function $\Phi({\bf x})$ in \cite{GP92} as $\Phi({\bf x}):=F({\bf x}):=\bar {\mathcal{A}}{\bf x}^{m-1}$, then $\Phi$ has the ULH property with degree $m-1$. Moreover, the function $\Gamma$, defined by (3) in \cite{GP92}, is equal to $F(\bf x)$. Consequently, under the condition that $\bar{{\mathcal A}}$ is copositive, by Theorem 2 in \cite{GP92}, we know that ${\rm SOL}(\mathbb{R}_+^n,{\bf q},\bar{\mathcal A})$ is a nonempty compact set for every ${\bf q}\in {\rm int}(S_{\bar{\mathcal A}}^*)$, which implies ${\rm int}(S_{\bar{\mathcal A}}^*)\subset Q(\mathbb{R}_+^n,\bar{\mathcal{A}})$. This implies, together with the closedness of $Q(\mathbb{R}_+^n,\bar{\mathcal{A}})$, that $S_{\bar{\mathcal A}}^*\subseteq Q(\mathbb{R}_+^n,\bar{\mathcal{A}})$. We obtain the desired results and complete the proof.
\qed\end{proof}

%%%%%%%%%%%%%%%%%%%%%%%%%%%%%%%%%%%%%%%%%%%%%%%%%%%%%%%%%%%%%%%%%%%%%%%%%%%%%%%%%%%%%%%%%%%%%%%
%\begin{figure}
%\includegraphics[angle=90,width=0.9\textwidth]{tcp.png}
%\caption{The solution existence results of TCP(${\mathbb R}^n_+,{\bf q},{\mathcal A}$) under different conditions investigated in the literature.}\label{fig1}
%\end{figure}
%
%%%%%%%%%%%%%%%%%%%%%%%%%%%%%%%%%%%%%%%%%%%%%%%%%%%%%%%%%%%%%%%%%%%%%%%%%%%%%%%%%%%%%%%%%%%%%%%%%%%

%By Lemma \ref{Closed} (i), we know that ${\rm Tpos}_{\mathbb{R}_+^n}(\mathcal{A})$ is closed for any $\mathcal{A}\in \mathcal{\mathcal T}_{m,n}$. So, in a similar way, we can prove the following result for TCP($\mathbb{R}_+^n,c, \mathcal{A}$), which does not require any condition on $\mathcal{A}$.

\begin{remark}
For given $\bar{\mathcal{A}}\in {\mathcal T}_{m,n}$, if for every subset $\alpha$ of $I_n$, $\bar{\mathcal{A}}_{\alpha}$ is $\mathbb{R}_+^{|\alpha|}$-nonsingular, then Assumption \ref{KsinAssum} holds by Property \ref{property1} and Lemma \ref{Mapclosed}, and it is easy to see that $S_{\bar{\mathcal{A}}}=\{0\}$, and hence $S_{\bar{\mathcal{A}}}^*=\mathbb{R}^n$, which means that ${\rm TCP}(\mathbb{R}_+^n,{\bf q},\bar{\mathcal{A}})$ has a solution for every ${\bf q}\in \mathbb{R}^n$. A similar result was presented by Gowda and Pang, see  \cite[Corollary 2]{GP92}.
\end{remark}

Now, we state the final result of this section. First, we introduce an important definition that will be used in our result. For given two tensors $\mathcal{A}=(a_{i_1i_2\ldots i_m})$ and $\mathcal{B}=(b_{i_1i_2\ldots i_m})\in {\mathcal T}_{m,n}$, the distance between $\mathcal{A}$ and $\mathcal{B}$ is measured by means of
the expression
$$
\left\|\mathcal{A}-\mathcal{B}\right\|_F:=\sqrt{\sum_{1\leq i_1,\ldots,i_m\leq n}\left(a_{i_1i_2\ldots i_m}-b_{i_1i_2\ldots i_m}\right)^2}.
$$

\begin{theorem}
Let $\bar{\mathcal{A}}\in {\mathcal T}_{m,n}$ and $\bar {\bf q}\in {\rm int} (S_{\bar{\mathcal{A}}}^{*})$. Suppose there exists a neighborhood $\bar {\mathcal{N}}$ of $\bar{\mathcal{A}}$ such that every $\mathcal{A}\in \bar{\mathcal{N}}$ satisfies Assumption \ref{KsinAssum}. Then, there exist positive scalars $ \varepsilon$ and $c$ such that for all $({\bf q},\mathcal{A})$ with $\|{\bf q}-\bar{\bf q}\|+\|\mathcal{A}-\bar{\mathcal{A}}\|_F\leq\varepsilon$ and ${\mathcal{A}} $ being copositive, the following statements hold:
\begin{itemize}
\itemindent 6pt
\item[{\rm (i)}] the ${\rm TCP}(\mathbb{R}_+^n,{\bf q},\mathcal{A})$ is solvable;
\item[{\rm (ii)}] for all $ {\bf x}\in {\rm SOL}(\mathbb{R}_+^n,{\bf q},\mathcal{A})$, it hold that $\|{\bf x}\|\leq c. $
\end{itemize}
\end{theorem}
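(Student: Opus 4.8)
The plan is to derive both conclusions from Theorem~\ref{CopExistence} by means of two compactness-and-contradiction arguments, each anchored at the open condition $\bar{\bf q}\in{\rm int}(S_{\bar{\mathcal{A}}}^{*})$. The underlying tool is the standard convex-analysis characterization: since every $S_{\mathcal{A}}:={\rm SOL}(\mathbb{R}_+^n,{\bf 0},\mathcal{A})$ is a closed cone with $S_{\mathcal{A}}\subseteq\mathbb{R}_+^n$, it is pointed, and ${\bf q}\in{\rm int}(S_{\mathcal{A}}^{*})$ holds if and only if $\langle{\bf q},{\bf v}\rangle>0$ for every ${\bf v}\in S_{\mathcal{A}}\backslash\{{\bf 0}\}$ (equivalently, there is $\mu>0$ with $\langle{\bf q},{\bf v}\rangle\geq\mu\|{\bf v}\|$ on $S_{\mathcal{A}}$, the region $\{{\bf v}:\langle{\bf q},{\bf v}\rangle\geq\mu\|{\bf v}\|\}$ being a convex cone so that the condition on $S_{\mathcal{A}}$ and on its closed convex conical hull coincide).

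For statement (i), I would first show that ${\bf q}\in{\rm int}(S_{\mathcal{A}}^{*})$ for all $({\bf q},\mathcal{A})$ in a suitable neighborhood of $(\bar{\bf q},\bar{\mathcal{A}})$. If not, there exist $({\bf q}^{(l)},\mathcal{A}^l)\rightarrow(\bar{\bf q},\bar{\mathcal{A}})$ and, by the characterization above, unit vectors ${\bf v}^{(l)}\in S_{\mathcal{A}^l}\backslash\{{\bf 0}\}$ with $\langle{\bf q}^{(l)},{\bf v}^{(l)}\rangle\leq 0$. Passing to a convergent subsequence ${\bf v}^{(l)}\rightarrow\bar{\bf v}$ with $\|\bar{\bf v}\|=1$ and letting $l\rightarrow\infty$ in the defining relations ${\bf v}^{(l)}\geq{\bf 0}$, $\mathcal{A}^l({\bf v}^{(l)})^{m-1}\geq{\bf 0}$, $\langle{\bf v}^{(l)},\mathcal{A}^l({\bf v}^{(l)})^{m-1}\rangle=0$, continuity gives $\bar{\bf v}\in S_{\bar{\mathcal{A}}}\backslash\{{\bf 0}\}$ and $\langle\bar{\bf q},\bar{\bf v}\rangle\leq 0$, contradicting $\bar{\bf q}\in{\rm int}(S_{\bar{\mathcal{A}}}^{*})$. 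Shrinking the neighborhood so that also $\mathcal{A}\in\bar{\mathcal{N}}$ (hence Assumption~\ref{KsinAssum} holds), any copositive $\mathcal{A}$ in it, together with ${\bf q}\in S_{\mathcal{A}}^{*}$, meets the hypotheses of Theorem~\ref{CopExistence}, so ${\rm TCP}(\mathbb{R}_+^n,{\bf q},\mathcal{A})$ is solvable.

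For statement (ii), I would prove the uniform bound by contradiction: if no pair $(\varepsilon,c)$ works, then choosing $\varepsilon=c^{-1}=l^{-1}$ produces $({\bf q}^{(l)},\mathcal{A}^l)\rightarrow(\bar{\bf q},\bar{\mathcal{A}})$ with each $\mathcal{A}^l$ copositive and solutions ${\bf x}^{(l)}\in{\rm SOL}(\mathbb{R}_+^n,{\bf q}^{(l)},\mathcal{A}^l)$ with $\|{\bf x}^{(l)}\|\rightarrow\infty$. Setting ${\bf z}^{(l)}:={\bf x}^{(l)}/\|{\bf x}^{(l)}\|$ and passing to ${\bf z}^{(l)}\rightarrow\bar{\bf z}$ with $\|\bar{\bf z}\|=1$, $\bar{\bf z}\geq{\bf 0}$, I would divide the feasibility relation $\mathcal{A}^l({\bf x}^{(l)})^{m-1}+{\bf q}^{(l)}\geq{\bf 0}$ by $\|{\bf x}^{(l)}\|^{m-1}$ and the complementarity identity $\mathcal{A}^l({\bf x}^{(l)})^{m}+\langle{\bf x}^{(l)},{\bf q}^{(l)}\rangle=0$ by $\|{\bf x}^{(l)}\|^{m}$; letting $l\rightarrow\infty$ yields $\bar{\mathcal{A}}\bar{\bf z}^{m-1}\geq{\bf 0}$ and $\bar{\mathcal{A}}\bar{\bf z}^{m}=0$, so $\bar{\bf z}\in S_{\bar{\mathcal{A}}}\backslash\{{\bf 0}\}$. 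The essential use of copositivity is that $\mathcal{A}^l({\bf x}^{(l)})^{m}\geq 0$, whence $\langle{\bf x}^{(l)},{\bf q}^{(l)}\rangle\leq 0$; dividing by $\|{\bf x}^{(l)}\|$ and passing to the limit gives $\langle\bar{\bf q},\bar{\bf z}\rangle\leq 0$, again contradicting $\bar{\bf q}\in{\rm int}(S_{\bar{\mathcal{A}}}^{*})$. Taking $\varepsilon$ below the thresholds furnished by both arguments and inside $\bar{\mathcal{N}}$, and $c$ from this last argument, produces the required scalars.

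The main obstacle is that the reference cone $S_{\mathcal{A}}^{*}$ varies with $\mathcal{A}$, so one cannot simply invoke continuity of a fixed dual cone; the resolution is that in each limiting argument the accumulation point falls into the \emph{fixed} cone $S_{\bar{\mathcal{A}}}$, where the open condition $\bar{\bf q}\in{\rm int}(S_{\bar{\mathcal{A}}}^{*})$ can be exploited. Two points deserve care: the limit vector must be nonzero, which normalization guarantees, and the sign $\langle{\bf x}^{(l)},{\bf q}^{(l)}\rangle\leq 0$ in (ii) comes from copositivity of each $\mathcal{A}^l$ rather than of $\bar{\mathcal{A}}$ alone.
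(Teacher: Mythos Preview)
Your proposal is correct and follows essentially the same approach as the paper: both parts are proved by contradiction, passing to normalized accumulation points that land in $S_{\bar{\mathcal{A}}}\backslash\{{\bf 0}\}$ and violate $\bar{\bf q}\in{\rm int}(S_{\bar{\mathcal{A}}}^{*})$, with Theorem~\ref{CopExistence} supplying solvability once ${\bf q}\in{\rm int}(S_{\mathcal{A}}^{*})$ is established locally. The only cosmetic difference is that you spell out the characterization of ${\rm int}(S_{\mathcal{A}}^{*})$ explicitly at the outset, whereas the paper uses it implicitly.
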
	

\begin{proof} We first prove part (i) that, there exists a neighborhood $\mathcal{N}$ of $(\bar {\bf q},\bar {\mathcal{A}})$, such that ${\bf q}\in {\rm int}(S_{\mathcal{A}}^*)$ for any $({\bf q},\mathcal{A})\in \mathcal{N}$, by contradiction. Suppose such no neighborhood exist, then there exists a sequences $\{{\bf q}^{(l)},\mathcal{A}^{l},{\bf v}^{(l)}\} \subset \mathbb{R}^{n}\times {\mathcal T}_{m,n}\times \mathbb{R}^{n}$ satisfying $({\bf q}^{(l)},\mathcal{A}^l)\rightarrow (\bar {\bf q},\bar{\mathcal{A}})$ and ${\bf v}^{(l)}\in {\rm SOL}(\mathbb{R}^n_+,{\bf 0},\mathcal{A}^{l})$, such that $({\bf v}^{(l)})^\top {\bf q}^{(l)}\leq 0$. Without loss of generality, we assume $\|{\bf v}^{(l)}\|=1$ for each $l$. Let $ \tilde{\bf v} $ be an accumulation point of the sequence $\{{\bf v}^{(l)}\}$. It is easy to see that $ \tilde{\bf v} $ is a nonzero solution of ${\rm TCP}(\mathbb{R}_+^n,{\bf 0},\bar {\mathcal A})$ and satisfies $\tilde{\bf v}^{\top}{\bar{\bf q}}\leq0$, which contradicts the assumption ${\bar{\bf q}}\in {\rm int}(S_{\bar{\mathcal A}}^{*})$. Hence, the existence of $\mathcal{N}$ follows. Moreover, without loss of generality, we may assume that $\mathcal{A}$ satisfies Assumption \ref{KsinAssum} for any $({\bf q},\mathcal{A})\in \mathcal{N}$. Consequently, by Theorem \ref{CopExistence}, we know that, for any $({\bf q},\mathcal{A}) \in \mathcal{N}$ with $\mathcal{A}$ being copositive, the ${\rm TCP}(\mathbb{R}_+^n,{\bf q},\mathcal{A})$ must have a solution. Hence part (i) is established.

We prove part (ii) by contradiction. Suppose that no such a constant $c$ exists. Then there exist sequences $\{({\bf q}^{(l)},\mathcal{A}^l)\}$ with $({\bf q}^{(l)},\mathcal{A}^l) \longrightarrow (\bar {\bf q},\bar{\mathcal{A}})$ and $\{{\bf x}^l\}$ with $\|{\bf x}^{l}\|\longrightarrow\infty$, such that for each $l$,  $\mathcal{A}^{l}$ is copositive, and
\begin{equation}\label{THJU}
{\bf x}^{(l)}\geq{\bf 0},~~{\bf w}^{(l)}:={\bf q}^{(l)}+\mathcal{A}^{l}({\bf x}^{(l)}) ^{m-1}\geq {\bf 0}~~{\rm and}~~({\bf x}^{(l)})^{\top}{\bf w}^{(l)}=0.
\end{equation}
Let $\bar {\bf x}$ be a subsequential limit of normalized sequence $\{{\bf x}^{(l)}/\|{\bf x}^{(l)}\|\}$. It holds for \eqref{THJU} that
$$\left\{\begin{array}{l}
{\bf x}^{(l)}/\|{\bf x}^{(l)}\|\geq {\bf 0},\\
\frac{{\bf q}^{(l)}}{\|{\bf x}^{(l)}\|^{m-1}}+\mathcal{A}^{l}\left(\frac{{\bf x}^{(l)}}{\|{\bf x}^{(l)}\|} \right)^{m-1} \geq{\bf 0},\\
0=\frac{({\bf q}^{(l)})^{\top}}{\|{\bf x}^{(l)}\|^{m-1}} \frac{{\bf x}^{(l)}}{\|{\bf x}^{(l)}\|}+\mathcal{A}^{l}\left(\frac{{\bf x}^{(l)}}{\|{\bf x}^{(l)}\|} \right)^{m},
\end{array}\right.$$
which, by passing to the limit $ l\longrightarrow\infty $ in these expressions, implies that $\bar {\bf x}\in S_{\bar{\mathcal{A}}}\backslash\{{\bf 0}\}$.
On the other hand, the copositivity of $\mathcal{A}^l$ implies that
$$
0=\frac{( {\bf q}^{(l)}) ^{\top}{\bf x}^{(l)}}{\|{\bf x}^{(l)}\|}+\frac{({\bf x}^{(l)})^{\top} \mathcal{A}^{l}({\bf x}^{(l)})^{m-1} }{\|{\bf x}^{(l)}\|}\geq({\bf q}^{(l)})^{\top} \frac{{\bf x}^{(l)}}{\|{\bf x}^{(l)}\|},
$$
By letting $l\longrightarrow\infty$, we obtain $\bar {\bf q}^\top \bar {\bf x}\leq 0$, which contradicts $\bar {\bf q}\in {\rm int}(S_{\bar{\mathcal{A}}}^*)$. This contradiction completes the proof of part (ii).
\qed\end{proof}

\section{Topological Properties and Stability}\label{sectop}

As far as we know, properties on solution set such as topological properties and stability are discussed much less in TCP literature. Thus, we in this section investigate these results to enrich the theory of TCPs.

\subsection{Topological properties}\label{subsectop}

We assume no special structure on $K$ other than the fact that it is closed and convex.
First, we introduce a concept for the study of topological properties of the solution set. As far as the semicontinuity concepts are concerned, we use the following terminology (see \cite[Section 6.2]{Be93}).

\begin{definition}\label{def51}
Let $W$ and $Y$ be two topological spaces. The mapping $\Psi:\;W\rightarrow 2^Y$ is said to be upper-semicontinuous at $\bar w\in W$, if whenever $U$ is an open subset of $Y$ containing $\Psi(\bar w)$, then $U$ contains $\Psi(w)$ for each $w$ in some neighborhood of $\bar w$.
\end{definition}

Here, we define the solution set of TCPs as the mapping ${\rm SOL}(\cdot,\cdot,\cdot): C({\mathbb R}^n)\times{\mathbb R}^n\times{\mathcal T}_{m,n}\rightarrow 2^{\mathbb{R}}$. Then, we have the following statements, which are closely related to the results presented in \cite{LHQ16} for tensor eigenvalue complementarity problems.

\begin{theorem}\label{theorem1} The following three statements are true
\begin{itemize}
\itemindent 6pt
\item[{\rm(i)}] The set $ \Sigma:= \left\{(K,{\bf q},\mathcal{A}, {\bf x}) \in {C}( {\mathbb R}^{n}) \times \mathbb{R}^n\times {\mathcal T}_{m,n}\times \mathbb{R}^{n}~|~{\bf x}\in{\rm SOL}(K,{\bf q},\mathcal{A})\right\}$ is closed in the product space ${C}( {\mathbb R}^{n}) \times \mathbb{R}^n\times {\mathcal T}_{m,n}\times \mathbb{R}^{n}$.
\item[{\rm(ii)}]  Let  $(\bar K,\bar {\bf q},\bar{\mathcal{A}}) \in {C}( {\mathbb R}^{n}) \times \mathbb{R}^n\times {\mathcal T}_{m,n}$. If $\bar{\mathcal{A}}$ is $\bar K$-regular, then the following set $$\bigcup_{(K,{\bf q},\mathcal{A}) \in \mathcal{N}}{\rm SOL}(K,{\bf q},\mathcal{A})$$ is bounded for some neighborhood $\mathcal{N}$ of $(\bar K,\bar {\bf q},\bar{\mathcal{A}})$.
\item[{\rm(iii)}]  Let  $(\bar K,\bar {\bf q},\bar{\mathcal{A}}) \in {C}( {\mathbb R}^{n}) \times \mathbb{R}^n\times {\mathcal T}_{m,n}$. If $\bar{\mathcal{A}}$ is $\bar K$-regular, then the map ${\rm SOL}(\cdot,\cdot,\cdot)$ is upper-semicontinuous at $(\bar K,\bar {\bf q},\bar{\mathcal{A}})$.
\end{itemize}
\end{theorem}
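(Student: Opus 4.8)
The plan is to establish the three parts in order, obtaining part~(iii) as a consequence of parts~(i) and~(ii). Throughout I would repeatedly exploit the cone-convergence mechanism already used in the proofs of Lemmas~\ref{Closed} and~\ref{Mapclosed}: if $\mathbf{x}^{(l)}\in K_l$, $\mathbf{x}^{(l)}\to\bar{\mathbf{x}}$ and $\delta(K_l,\bar K)\to 0$, then $\bar{\mathbf{x}}\in\bar K$. Indeed, writing $\mathbf{z}^{(l)}:=\mathbf{x}^{(l)}/\|\mathbf{x}^{(l)}\|$ (the case $\mathbf{x}^{(l)}=\mathbf{0}$ being trivial), the scaling identity $\mathrm{dist}(t\mathbf{z},K)=t\,\mathrm{dist}(\mathbf{z},K)$ valid for a cone $K$, together with \eqref{ddKK}, gives $\mathrm{dist}(\mathbf{z}^{(l)},\bar K)\le\delta(K_l,\bar K)\to 0$, whence $\mathrm{dist}(\mathbf{x}^{(l)},\bar K)=\|\mathbf{x}^{(l)}\|\,\mathrm{dist}(\mathbf{z}^{(l)},\bar K)\to 0$ and $\bar{\mathbf{x}}\in\bar K$ by closedness.

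For part~(i), I would take a sequence $(K_l,\mathbf{q}^{(l)},\mathcal{A}^l,\mathbf{x}^{(l)})\in\Sigma$ converging to $(\bar K,\bar{\mathbf{q}},\bar{\mathcal{A}},\bar{\mathbf{x}})$ and verify that each defining condition survives the limit. Cone membership $\bar{\mathbf{x}}\in\bar K$ follows from the mechanism above. For the dual-cone condition, set $\mathbf{w}^{(l)}:=\mathcal{A}^l(\mathbf{x}^{(l)})^{m-1}+\mathbf{q}^{(l)}$; by joint continuity of the polynomial map $(\mathcal{A},\mathbf{x})\mapsto\mathcal{A}\mathbf{x}^{m-1}$ we have $\mathbf{w}^{(l)}\to\bar{\mathbf{w}}:=\bar{\mathcal{A}}\bar{\mathbf{x}}^{m-1}+\bar{\mathbf{q}}$, and since the Walkup--Wets isometry yields $\delta(K_l^*,\bar K^*)=\delta(K_l,\bar K)\to 0$, the identical argument applied to the dual cones gives $\bar{\mathbf{w}}\in\bar K^*$. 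Finally $\langle\bar{\mathbf{x}},\bar{\mathbf{w}}\rangle=0$ by continuity of the inner product, so $\bar{\mathbf{x}}\in\mathrm{SOL}(\bar K,\bar{\mathbf{q}},\bar{\mathcal{A}})$ and hence $\Sigma$ is closed.

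For part~(ii), I would argue by contradiction: if no bounding neighborhood exists, then there are $(K_l,\mathbf{q}^{(l)},\mathcal{A}^l)\to(\bar K,\bar{\mathbf{q}},\bar{\mathcal{A}})$ and solutions $\mathbf{x}^{(l)}$ with $\|\mathbf{x}^{(l)}\|\to\infty$. The complementarity condition reads $\mathcal{A}^l(\mathbf{x}^{(l)})^m+\langle\mathbf{x}^{(l)},\mathbf{q}^{(l)}\rangle=0$; dividing by $\|\mathbf{x}^{(l)}\|^m$ and writing $\mathbf{z}^{(l)}:=\mathbf{x}^{(l)}/\|\mathbf{x}^{(l)}\|$ gives $\mathcal{A}^l(\mathbf{z}^{(l)})^m+\langle\mathbf{z}^{(l)},\mathbf{q}^{(l)}\rangle/\|\mathbf{x}^{(l)}\|^{m-1}=0$. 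Passing to a convergent subsequence $\mathbf{z}^{(l)}\to\bar{\mathbf{z}}$ with $\|\bar{\mathbf{z}}\|=1$, the second term vanishes in the limit (bounded numerator, denominator $\to\infty$), so $\bar{\mathcal{A}}\bar{\mathbf{z}}^m=0$ with $\bar{\mathbf{z}}\in\bar K\setminus\{\mathbf{0}\}$ (again by the cone mechanism), contradicting the $\bar K$-regularity of $\bar{\mathcal{A}}$. Part~(iii) then follows by combining the two: assuming upper-semicontinuity fails, there is an open set $U\supseteq\mathrm{SOL}(\bar K,\bar{\mathbf{q}},\bar{\mathcal{A}})$ and a sequence $(K_l,\mathbf{q}^{(l)},\mathcal{A}^l)\to(\bar K,\bar{\mathbf{q}},\bar{\mathcal{A}})$ with $\mathbf{x}^{(l)}\in\mathrm{SOL}(K_l,\mathbf{q}^{(l)},\mathcal{A}^l)\setminus U$; part~(ii) makes $\{\mathbf{x}^{(l)}\}$ bounded for large $l$, so a subsequence converges to some $\bar{\mathbf{x}}$, and part~(i) forces $\bar{\mathbf{x}}\in\mathrm{SOL}(\bar K,\bar{\mathbf{q}},\bar{\mathcal{A}})\subseteq U$, whereas each $\mathbf{x}^{(l)}\notin U$ and the complement of $U$ is closed, so $\bar{\mathbf{x}}\notin U$---a contradiction.

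The main obstacle I anticipate is the careful treatment of cone convergence in part~(i), in particular justifying $\bar{\mathbf{w}}\in\bar K^*$: this is exactly where the equivalent formulation \eqref{ddKK} of $\delta$ and the Walkup--Wets isometry $\delta(K_l^*,\bar K^*)=\delta(K_l,\bar K)$ become indispensable, since without the isometry one has no direct handle on the convergence of the dual cones $K_l^*$. Once this point is secured, parts~(ii) and~(iii) reduce to standard normalization and contradiction arguments that reuse the same machinery.
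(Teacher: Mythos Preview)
Your proposal is correct and follows essentially the same route as the paper: part~(i) via sequential closedness using the Walkup--Wets isometry to handle the dual cones, part~(ii) by the normalization/contradiction argument exploiting $\bar K$-regularity, and part~(iii) by combining the two through a contradiction with the complement of $U$ being closed. Your write-up is in fact slightly more explicit than the paper's about the cone-convergence mechanism, but the ideas coincide.
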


\begin{proof} (i) The closedness of $\Sigma$ at $(\bar K,\bar {\bf q},\bar{\mathcal{A}}) \in {C}( {\mathbb R}^{n}) \times \mathbb{R}^n\times {\mathcal T}_{m,n}$ amounts to saying that
\begin{equation}
\left.
\begin{array}{r}
(K_l, {\bf q}^{(l)}, \mathcal{A}^{l})\rightarrow (\bar K, \bar {\bf q}, \bar{\mathcal{A}}),~{\bf x}^{(l)}\rightarrow \bar {\bf x}\\
{\bf x}^{(l)}\in{\rm SOL}(K_l,{\bf q}^{(l)},\mathcal{A}^l)
\end{array}
\right\}\Rightarrow \bar {\bf x}\in{\rm SOL}(\bar K,\bar {\bf q},\bar {\mathcal{A}}),
\end{equation}
where $(K_l,{\bf q}^{(l)}, \mathcal{A}^{l})\in{C}( {\mathbb R}^{n}) \times \mathbb{R}^n\times {\mathcal T}_{m,n}$. Since ${\bf x}^{(l)}\in{\rm SOL}(K_l,{\bf q}^{(l)},\mathcal{A}^l)$, we have
\begin{equation}\label{Knuxnu}
{\bf x}^{(l)}\in K_l,~{\bf w}^{(l)}:=\mathcal{A}^l({\bf x}^{(l)})^{m-1}+{\bf q}^{(l)}\in K_l^* ~{\rm and}~\langle {\bf x}^{(l)},{\bf w}^{(l)}\rangle=0.
\end{equation}
Note that $\delta(K_{1}^{*},K_{2}^{*}) =\delta(K_{1},K_{2})$ for any $K_{1},K_{2}\in C(\mathbb{R}^{n})$. By passing to the limit in (\ref{Knuxnu}), we then have
$$\bar {\bf x}\in \bar K,~\bar {\bf w}:=\mathcal{A}\bar {\bf x}^{m-1}+\bar {\bf q}\in \bar K^* ~{\rm and}~\langle \bar {\bf x},\bar {\bf w}\rangle=0,
$$
which implies that $\bar {\bf x}\in{\rm SOL}(\bar K,\bar {\bf q},\bar{\mathcal{A}})$, and arrive at the desired conclusion part (i).

We argue part (ii) by contradiction. Suppose that the conclusion is not true, then there exists a sequence $\{(K_l, {\bf q}^{(l)}, \mathcal{A}^{l})\}$ satisfying
$$\delta(K_l,\bar K)\rightarrow 0, ~\|{\bf q}^{(l)}-\bar {\bf q}\|\rightarrow 0, ~\|\mathcal{A}^{l}-\bar{\mathcal{A}}\|_F\rightarrow 0~~{\rm and}~~\|{\bf x}^{(l)}\|\rightarrow\infty,$$
such that ${\bf x}^{(l)}\in{\rm SOL}(K_l,{\bf q}^{(l)},\mathcal{A}^l)$, i.e., (\ref{Knuxnu}) hold for any $l=1,2,\cdots$. Here, $\bar{\mathcal{A}}\in {\mathcal T}_{m,n}$. Consequently, it holds that
\begin{equation}\label{KnuLamnu}\mathcal{A}^l({\bf x}^{(l)})^{m}+({\bf q}^{(l)})^\top {\bf x}^{(l)}=0.\end{equation}
Let $\bar {\bf x}^{(l)}={\bf x}^{(l)}/\|{\bf x}^{(l)}\|$. We then assume that $\bar {\bf x}^{(l)}\rightarrow \bar {\bf x}$ with $\|\bar {\bf x}\|=1$ due to $\|\bar {\bf x}^{(l)}\|=1$. From ${\bf x}^{(l)}/\|{\bf x}^{(l)}\|\in K_l$ and $\delta(K_l,\bar K)\rightarrow 0$, we obtain $\bar{\bf x}\in \bar K\backslash \{{\bf 0}\}$. It follows from (\ref{KnuLamnu}) that
$$\mathcal{A}^l(\bar {\bf x}^{(l)})^{m}+\frac{1}{\|{\bf x}^{(l)}\|^{m-1}}({\bf q}^{(l)})^\top \bar {\bf x}^{(l)}=0.$$
Moreover, by passing the limit in the expression above, we have
$\bar{\mathcal{A}}\bar{\bf x}^{m}=0$, which contradicts the condition that $\bar{\mathcal{A}}$ is $\bar K$-regular, since $\bar {\bf x}\in \bar K\backslash\{{\bf 0}\}$.

To show part (iii), i.e., the upper-semicontinuity of ${\rm SOL}(\cdot,\cdot,\cdot)$ at $(\bar K,\bar {\bf q},\bar{\mathcal{A}})$, we also argue it by contradiction. Suppose that ${\rm SOL}(\cdot,\cdot,\cdot)$ is not upper-semicontinuous at $(\bar K,\bar {\bf q},\bar{\mathcal{A}})$, then we could find an open set $\overline{U}\subset \mathbb{R}^{n} $ and a sequence $ \{(K_l,{\bf q}^{(l)},\mathcal{A}^l)\}$ satisfying $(K_l,{\bf q}^{(l)},\mathcal{A}^l)\rightarrow (\bar K,\bar {\bf q},\bar{\mathcal{A}})$, such that $$ {\rm SOL}(\bar K,\bar {\bf q},\bar{\mathcal{A}})\subset\bar{U},~~{\rm but}~~{\rm SOL}(K_l,{\bf q}^{(l)},\mathcal{A}^l)\cap (\mathbb{R}^{n}\backslash\bar{U})\neq \emptyset$$
 for any $l=1,2,\cdots$.
 Now, for each $l$, taking ${\bf x}^{(l)}\in {\rm SOL}(K_l,{\bf q}^{(l)},\mathcal{A}^l)\cap (\mathbb{R}^{n}\backslash\bar{U})$, it then follows by part (ii) that the sequence $\{{\bf x}^{(l)}\}$ admits a converging subsequence. By part (i), the corresponding limit must be in $ {\rm SOL}(\bar K,\bar {\bf q},\bar {\mathcal{A}})\cap {\mathbb R}^{n} \backslash\bar U$, which, together with the fact that ${\rm SOL}(\bar K,\bar {\bf q},\bar{\mathcal{A}})\subset\bar{U}$,  leads to a contradiction.
\qed\end{proof}

From part (i) of Theorem \ref{theorem1}, we know that, for any $(\bar K,\bar {\bf q},\bar{\mathcal{A}}) \in {C}( {\mathbb R}^{n}) \times \mathbb{R}^n\times {\mathcal T}_{m,n}$, ${\rm SOL}(\bar K,\bar {\bf q},\bar{\mathcal{A}})$ is a closed set in $\mathbb{R}^n$; By Theorem \ref{theorem1} (ii), we know that, for any $(\bar K,\bar {\bf q},\bar{\mathcal{A}}) \in {C}(\mathbb{R}^{n}) \times \mathbb{R}^n\times {\mathcal T}_{m,n}$ with $\bar{\mathcal{A}}$ being $\bar K$-regular, ${\rm SOL}(\bar K,\bar {\bf q},\bar{\mathcal{A}})$ is bounded; Moreover, for any $\varepsilon>0$, by Theorem \ref{theorem1} (i) and (iii), there exists a neighborhood $\mathcal{N}$ of $(\bar K,\bar {\bf q},\bar{\mathcal{A}})$, such that
$$
{\rm SOL}(K,{\bf q},\mathcal{A})\subseteq{\rm SOL}(\bar K,\bar {\bf q},\bar{\mathcal{A}})+\varepsilon {\mathbb B}_n,~~~\forall~(K,{\bf q},\mathcal{A})\in \mathcal{N}.
$$

\subsection{Stability analysis} \label{Stabana}

In this subsection, we study the sensitivity of TCPs at an isolated solution. Consider the ${\rm TCP}(K,\bar {\bf q},\bar{\mathcal{A}})$ with a given solution $\bar {\bf x}$ which is assumed to be locally unique, i.e., there exists a neighborhood $V$ of $\bar{\bf x}$ such that ${\rm SOL}(K, \bar{\bf q}, \bar{\mathcal A})\cap V =\{\bar {\bf x}\}$. The cone $K\in C(\mathbb{R}^n)$ is fixed throughout the rest of this section. Like Section \ref{subsectop}, we also assume no special structure on $K$ other than the fact that it is closed
and convex. We want to study the change of $\bar {\bf x}$ as $(\bar {\bf q},\bar{\mathcal{A}})$ is perturbed. A central question is of course whether the perturbed ${\rm TCP}(K,{\bf q},\mathcal{A})$, where $({\bf q},\mathcal{A})$ is a small perturbation of $(\bar {\bf q},\bar{\mathcal{A}})$, will have a solution that is near $\bar {\bf x}$. To answer this question, we first present the following proposition.

\begin{proposition}\label{Convergsolution}
Let $K\in C(\mathbb{R}^n)$, $\mathcal{A}\in {\mathcal T}_{m,n}$ and ${\bf q}\in \mathbb{R}^n$. If $\bar {\bf x}\in {\rm SOL}(K,{\bf q},\mathcal{A})$ is isolated,
then for every neighborhood $\mathcal{N}$ of $\bar {\bf x}$ satisfying
\begin{equation}\label{Soldsolution}
{\rm SOL}(K,{\bf q},\mathcal{A})\cap {\rm cl}(\mathcal{N}) = \{ \bar {\bf x}\}
\end{equation}
and for every sequence of $\{(\mathcal{A}^l,{\bf q}^{(l)})\}$ converging to $(\mathcal{A},{\bf q})$, every sequence of vectors $\{{\bf x}^{(l)}\}$, where ${\bf x}^{(l)}\in {\rm SOL}(K,{\bf q}^{(l)},\mathcal{A}^l)\cap \mathcal{N}$ for every $l$, converges to $\bar {\bf x}$.
\end{proposition}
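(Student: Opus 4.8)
The plan is to run a standard subsequence–extraction argument, combining the closed-graph property of the solution map (Theorem \ref{theorem1}(i)) with the local uniqueness encoded in \eqref{Soldsolution}. First I would record the one geometric fact I need: since every vector ${\bf x}^{(l)}$ lies in the neighborhood $\mathcal{N}$ of $\bar {\bf x}$, which we take to be bounded so that ${\rm cl}(\mathcal{N})$ is compact, the entire sequence $\{{\bf x}^{(l)}\}$ is bounded. This is the only point at which the structure of $\mathcal{N}$ enters, and it is what lets the argument proceed without any $\bar K$-regularity or structural hypothesis on $\mathcal{A}$.

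Next I would argue by contradiction. Suppose $\{{\bf x}^{(l)}\}$ does not converge to $\bar {\bf x}$. Then there exist $\varepsilon>0$ and a subsequence $\{{\bf x}^{(l_k)}\}$ with $\|{\bf x}^{(l_k)}-\bar {\bf x}\|\geq\varepsilon$ for all $k$. By the boundedness noted above and the Bolzano--Weierstrass theorem, after passing to a further subsequence I may assume ${\bf x}^{(l_k)}\rightarrow\hat {\bf x}$ for some $\hat {\bf x}\in\mathbb{R}^n$; the inequality $\|{\bf x}^{(l_k)}-\bar {\bf x}\|\geq\varepsilon$ passes to the limit and forces $\|\hat {\bf x}-\bar {\bf x}\|\geq\varepsilon$, so in particular $\hat {\bf x}\neq\bar {\bf x}$. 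Moreover, since each ${\bf x}^{(l_k)}\in\mathcal{N}$ and ${\bf x}^{(l_k)}\rightarrow\hat {\bf x}$, the limit satisfies $\hat {\bf x}\in{\rm cl}(\mathcal{N})$.

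Then I would invoke closedness of $\Sigma$. Because the cone is held fixed, the first coordinate $K_l\equiv K$ converges trivially (indeed $\delta(K,K)=0$); together with $(\mathcal{A}^{l_k},{\bf q}^{(l_k)})\rightarrow(\mathcal{A},{\bf q})$, ${\bf x}^{(l_k)}\rightarrow\hat {\bf x}$, and ${\bf x}^{(l_k)}\in{\rm SOL}(K,{\bf q}^{(l_k)},\mathcal{A}^{l_k})$, Theorem \ref{theorem1}(i) yields $\hat {\bf x}\in{\rm SOL}(K,{\bf q},\mathcal{A})$. Combining this with $\hat {\bf x}\in{\rm cl}(\mathcal{N})$ gives $\hat {\bf x}\in{\rm SOL}(K,{\bf q},\mathcal{A})\cap{\rm cl}(\mathcal{N})$, which by \eqref{Soldsolution} equals $\{\bar {\bf x}\}$. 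Hence $\hat {\bf x}=\bar {\bf x}$, contradicting $\hat {\bf x}\neq\bar {\bf x}$. Since the assumption of non-convergence was untenable, the whole sequence $\{{\bf x}^{(l)}\}$ converges to $\bar {\bf x}$.

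The only genuinely delicate point is the boundedness of $\{{\bf x}^{(l)}\}$, which the remainder of the argument rests on: one needs $\mathcal{N}$ bounded so that ${\rm cl}(\mathcal{N})$ is compact and a convergent subsequence exists. This is consistent with the isolated-solution setup, where one works with a bounded neighborhood of $\bar {\bf x}$; were $\mathcal{N}$ unbounded, the perturbed solutions could in principle escape to infinity and the stated conclusion would fail. Everything beyond this boundedness step is a routine application of the closed graph from Theorem \ref{theorem1}(i) and the uniqueness in \eqref{Soldsolution}.
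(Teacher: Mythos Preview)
Your proof is correct and follows essentially the same approach as the paper: both arguments use boundedness of $\{{\bf x}^{(l)}\}$, extract an accumulation point, show by passing to the limit that it lies in ${\rm SOL}(K,{\bf q},\mathcal{A})\cap{\rm cl}(\mathcal{N})$, and conclude via \eqref{Soldsolution} that it equals $\bar{\bf x}$. The only cosmetic differences are that the paper phrases this as ``every accumulation point equals $\bar{\bf x}$'' rather than by contradiction, and that it performs the limiting step directly (``by a simple limiting argument'') instead of citing Theorem~\ref{theorem1}(i); your observation that $\mathcal{N}$ must be taken bounded for the boundedness of $\{{\bf x}^{(l)}\}$ is apt, as the paper asserts this without comment.
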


\begin{proof}
Note that such a sequence $\{{\bf x}^{(l)}\}$ in the proposition must be bounded. Let ${\bf x}^*$ be any accumulation point of this sequence, which means that there exists a subsequence $\{{\bf x}^{(l_i)}\}$ of $\{{\bf x}^{(l)}\}$ such that ${\bf x}^{(l_i)}\rightarrow {\bf x}^*$ as $i\rightarrow\infty$. Since ${\bf x}^{(l_i)}\in {\rm SOL}(K,{\bf q}^{(l_i)},\mathcal{A}^{l_i})\cap \mathcal{N}$ and $\{(\mathcal{A}^{l_i},{\bf q}^{(l_i)})\}$ converges to $(\mathcal{A},{\bf q})$, by a simple limiting argument, it is easy to know that ${\bf x}^*\in {\rm SOL}(K,{\bf q},\mathcal{A})\cap {\rm cl}(\mathcal{N})$. By (\ref{Soldsolution}), we know that ${\bf x}^*=\bar {\bf x}$. Therefore,
$\{{\bf x}^{(l)}\}$ converges to $\bar {\bf x}$ as $l\rightarrow\infty$.
\qed\end{proof}

Recall that the tangent cone of $K$ at a vector $\bar {\bf x}\in K$, denoted by ${\mathscr T}(\bar {\bf x},K)$, consists of all vectors ${\bf v}\in \mathbb{R}^n$, for which there exists a sequence of vectors $\{{\bf x}^{(k)}\}\subset K$ and a sequence of positive scalars $\{\tau_k\}$ such that
$$
\lim_{k\rightarrow \infty}{\bf x}^{(k)}=\bar {\bf x},~~\lim_{k\rightarrow \infty}\tau_k=0~~{\rm and}~~\lim_{k\rightarrow \infty}\frac{{\bf x}^{(k)}-\bar {\bf x}}{\tau_k}={\bf v}.
$$
For given $\bar {\bf x}\in \mathbb{R}^n$, denote
$$
{\mathbb S}(\bar {\bf x}):=\left\{{\bf v}\in \mathbb{R}^n~|~{\bf v}^\top \left(\mathcal{A}\bar {\bf x}^{m-1}+ {\bf q}\right)=0\right\}.
$$

\begin{proposition}\label{localunique}
Let $K\in C(\mathbb{R}^n)$, $\mathcal{A}\in {\mathcal T}_{m,n}$ and ${\bf q}\in \mathbb{R}^n$. Suppose that $\mathcal{A}$ is sub-symmetric with respect to the indices $\{i_2,\ldots,i_m\}$. If $\bar {\bf x}\in {\rm SOL}(K,{\bf q},\mathcal{A})$ and
\begin{equation}\label{Secondcon}
\sum_{i_1,i_2,\ldots,i_m=1}^na_{i_1i_2i_3\ldots i_m}v_{i_1}v_{i_2}\bar x_{i_3}\cdots \bar x_{i_m}>0,~~~\forall~{\bf v}\in {\mathscr T}(\bar {\bf x},K)\cap {\mathbb S}(\bar {\bf x}),
\end{equation}
then $\bar {\bf x}$ is locally unique.
\end{proposition}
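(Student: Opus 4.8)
The plan is to argue by contradiction, following the standard template for second-order sufficient conditions in complementarity theory. Suppose $\bar{\bf x}$ is not locally unique. Then there is a sequence $\{{\bf x}^{(k)}\}\subset{\rm SOL}(K,{\bf q},\mathcal{A})$ with ${\bf x}^{(k)}\neq\bar{\bf x}$ and ${\bf x}^{(k)}\to\bar{\bf x}$. Setting $\tau_k:=\|{\bf x}^{(k)}-\bar{\bf x}\|>0$ and ${\bf v}^{(k)}:=({\bf x}^{(k)}-\bar{\bf x})/\tau_k$, I pass once to a subsequence so that ${\bf v}^{(k)}\to{\bf v}$ with $\|{\bf v}\|=1$. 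The goal is to show that this nonzero direction ${\bf v}$ lies in $\mathscr{T}(\bar{\bf x},K)\cap{\mathbb S}(\bar{\bf x})$ yet makes the left-hand side of \eqref{Secondcon} nonpositive, the contradiction we seek.

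First I would locate ${\bf v}$ in the two relevant sets. Membership ${\bf v}\in\mathscr{T}(\bar{\bf x},K)$ is immediate from the definition of the tangent cone, using precisely the sequences $\{{\bf x}^{(k)}\}\subset K$ and $\{\tau_k\}$ above. For ${\bf v}\in{\mathbb S}(\bar{\bf x})$ I would exploit the complementarity relations. Writing ${\bf w}^{(k)}:=\mathcal{A}({\bf x}^{(k)})^{m-1}+{\bf q}$ and $\bar{\bf w}:=\mathcal{A}\bar{\bf x}^{m-1}+{\bf q}$, the facts $\bar{\bf x},{\bf x}^{(k)}\in K$, $\bar{\bf w},{\bf w}^{(k)}\in K^*$ together with $\langle{\bf x}^{(k)},{\bf w}^{(k)}\rangle=\langle\bar{\bf x},\bar{\bf w}\rangle=0$ yield the sandwich
\begin{equation}
\langle{\bf x}^{(k)}-\bar{\bf x},{\bf w}^{(k)}\rangle\le0\le\langle{\bf x}^{(k)}-\bar{\bf x},\bar{\bf w}\rangle.
\end{equation}
Dividing through by $\tau_k$ and letting $k\to\infty$ (using ${\bf w}^{(k)}\to\bar{\bf w}$ by continuity of $F$) squeezes both ends to $\langle{\bf v},\bar{\bf w}\rangle$, forcing $\langle{\bf v},\bar{\bf w}\rangle=0$, i.e. ${\bf v}\in{\mathbb S}(\bar{\bf x})$.

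The crucial second-order step is where the sub-symmetry of $\mathcal{A}$ enters. Subtracting the two inner products above gives $\langle{\bf x}^{(k)}-\bar{\bf x},{\bf w}^{(k)}-\bar{\bf w}\rangle\le0$. Sub-symmetry with respect to $\{i_2,\ldots,i_m\}$ makes $F({\bf x})=\mathcal{A}{\bf x}^{m-1}$ continuously differentiable with Jacobian $\nabla F(\bar{\bf x})=(m-1)\,\mathcal{A}\bar{\bf x}^{m-2}$, so the first-order expansion reads ${\bf w}^{(k)}-\bar{\bf w}=(m-1)\,\mathcal{A}\bar{\bf x}^{m-2}({\bf x}^{(k)}-\bar{\bf x})+o(\tau_k)$. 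Substituting and dividing by $\tau_k^2$ gives
\begin{equation}
0\ge\frac{\langle{\bf x}^{(k)}-\bar{\bf x},{\bf w}^{(k)}-\bar{\bf w}\rangle}{\tau_k^2}=(m-1)({\bf v}^{(k)})^\top\mathcal{A}\bar{\bf x}^{m-2}{\bf v}^{(k)}+o(1),
\end{equation}
and passing to the limit yields ${\bf v}^\top\mathcal{A}\bar{\bf x}^{m-2}{\bf v}\le0$ since $m\ge2$. Finally, I would observe that ${\bf v}^\top\mathcal{A}\bar{\bf x}^{m-2}{\bf v}$ is exactly the homogeneous sum appearing in \eqref{Secondcon}, with $i_1,i_2$ playing the roles of the two indices of the matrix $\mathcal{A}\bar{\bf x}^{m-2}$. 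Since ${\bf v}\in\mathscr{T}(\bar{\bf x},K)\cap{\mathbb S}(\bar{\bf x})$ and ${\bf v}\neq{\bf 0}$, condition \eqref{Secondcon} forces this quantity to be strictly positive, contradicting the inequality just derived. Hence no such sequence exists and $\bar{\bf x}$ is locally unique.

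I expect the main obstacle to be this second-order step: verifying cleanly that sub-symmetry produces the Jacobian $(m-1)\,\mathcal{A}\bar{\bf x}^{m-2}$ and controlling the remainder so that the $o(\tau_k^2)$ contribution genuinely vanishes after dividing by $\tau_k^2$. The other arithmetic is routine once one extracts the single convergent subsequence of $\{{\bf v}^{(k)}\}$ at the outset, which is what makes the three limiting arguments (tangent cone, $\langle{\bf v},\bar{\bf w}\rangle=0$, and the quadratic form) compatible.
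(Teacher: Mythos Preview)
Your argument is correct and follows essentially the same route as the paper's proof: contradiction via a normalized sequence, the sandwich $\langle{\bf x}^{(k)}-\bar{\bf x},{\bf w}^{(k)}\rangle\le0\le\langle{\bf x}^{(k)}-\bar{\bf x},\bar{\bf w}\rangle$ to place ${\bf v}$ in $\mathscr{T}(\bar{\bf x},K)\cap{\mathbb S}(\bar{\bf x})$, and then the first-order expansion of $F$ (using sub-symmetry to get the Jacobian $(m-1)\mathcal{A}\bar{\bf x}^{m-2}$) to contradict \eqref{Secondcon}. The only cosmetic difference is that you subtract the two inequalities first and expand ${\bf w}^{(k)}-\bar{\bf w}$, whereas the paper expands $({\bf x}^{(l)}-\bar{\bf x})^\top{\bf w}^{(l)}$ and then invokes the other inequality to discard the nonnegative first-order term; the two computations are equivalent.
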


\begin{proof}
Assume for the sake of contradiction that $\bar {\bf x}$ is not locally unique.
There exists a sequence $\{{\bf x}^{(l)}\}\subset {\rm SOL}(K,{\bf q},\mathcal{A})$ converging to $\bar {\bf x}$ with ${\bf x}^{(l)}\neq \bar {\bf x}$ for
all $l$. By the definition of ${\mathscr T}(\bar {\bf x}, K)$, it is easy to see that every accumulation point of the normalized sequence
$\{({\bf x}^{(l)}-\bar {\bf x})/\|{\bf x}^{(l)}-\bar {\bf x}\|\}$ must belong to ${\mathscr T}(\bar {\bf x}, K)$. Let ${\bf v}$ be any such point. Without loss of generality, we assume $({\bf x}^{(l)}-\bar {\bf x})/\|{\bf x}^{(l)}-\bar {\bf x}\|\rightarrow {\bf v}$ as $l\rightarrow\infty$. Since $\bar {\bf x}$ and ${\bf x}^{(l)}\in {\rm SOL}(K,{\bf q},\mathcal{A})$, we have
\begin{equation}\label{barKK}
K\ni \bar {\bf x}\perp (\mathcal{A}\bar {\bf x}^{m-1}+{\bf q})\in K^*
\end{equation}
and
\begin{equation}\label{barKK1}
K\ni {\bf x}^{(l)}\perp \left(\mathcal{A}({\bf x}^{(l)})^{m-1}+{\bf q}\right)\in K^*,~~~\forall ~l=1,2,\ldots.
\end{equation}
By (\ref{barKK}) and (\ref{barKK1}), we have
\begin{equation}\label{lljk}
\bar {\bf x}^\top\left(\mathcal{A}({\bf x}^{(l)})^{m-1}+{\bf q}\right)\geq 0
\end{equation}
and
\begin{equation}\label{lljkl}
({\bf x}^{(l)})^\top\left(\mathcal{A}\bar {\bf x}^{m-1}+{\bf q}\right)\geq 0
\end{equation}
for every $l$. Consequently, by (\ref{barKK}) and (\ref{lljkl}), we  have
\begin{equation}\label{OPLK}
({\bf x}^{(l)}-\bar {\bf x})^\top\left(\mathcal{A}\bar {\bf x}^{m-1}+{\bf q}\right)\geq 0,
 \end{equation}
 and hence by diving by $\|{\bf x}^{(l)}-\bar {\bf x}\|$ in this expression and letting $l\rightarrow \infty$, we obtain
\begin{equation}\label{THRY}
{\bf v}^\top\left(\mathcal{A}\bar {\bf x}^{m-1}+{\bf q}\right)\geq 0.
\end{equation}
On the other hand, by (\ref{barKK1}) and (\ref{lljk}), it holds that
\begin{equation}\label{HJKL}
({\bf x}^{(l)}-\bar {\bf x})^\top\left(\mathcal{A}({\bf x}^{(l)})^{m-1}+{\bf q}\right)\leq 0
\end{equation} for $l=1,2,\ldots$.
Consequently, diving by $\|{\bf x}^{(l)}-\bar {\bf x}\|$ in (\ref{HJKL}) and letting $l\rightarrow \infty$, we obtain
\begin{equation}\label{GGHH}
{\bf v}^\top\left(\mathcal{A}\bar {\bf x}^{m-1}+{\bf q}\right)\leq 0.\end{equation}
since ${\bf x}^{(l)}\rightarrow \bar {\bf x}$ as $l\rightarrow\infty$. By (\ref{THRY}) and (\ref{GGHH}), we know that ${\bf v}^\top\left(\mathcal{A}\bar {\bf x}^{m-1}+{\bf q}\right)= 0$, that is, ${\bf v}\in {\mathbb S}(\bar {\bf x})$. Therefore, ${\bf v}\in {\mathscr T}(\bar {\bf x},K)\cap {\mathbb S}(\bar {\bf x})$. Since $\mathcal{A}$ is {\it sub-symmetric} with respect to the indices $\{i_2,\ldots,i_m\}$, by (\ref{HJKL}) and the fact that the function $\mathcal{A}{\bf x}^{m-1}$ is continuously differentiable at $\bar {\bf x}$, we know that for every $l$,
$$
\begin{array}{lll}
0&\geq &({\bf x}^{(l)}-\bar {\bf x})^\top\left(\mathcal{A}({\bf x}^{(l)})^{m-1}+{\bf q}\right)\\
&=&({\bf x}^{(l)}-\bar {\bf x})^\top\left(\mathcal{A}\bar {\bf x}^{m-1}+(m-1)\mathcal{A}\bar {\bf x}^{m-2}({\bf x}^{(l)}-\bar {\bf x})+o(\|{\bf x}^{(l)}-\bar {\bf x}\|)+{\bf q}\right)\\
&\geq &(m-1)({\bf x}^{(l)}-\bar {\bf x})^\top\left(\mathcal{A}\bar {\bf x}^{m-2}({\bf x}^{(l)}-\bar {\bf x})+o(\|{\bf x}^{(l)}-\bar {\bf x}\|)\right),
\end{array}
$$
where the last inequality is due to (\ref{OPLK}). Diving by $\|{\bf x}^{(l)}-\bar {\bf x}\|$ in the above expression  and letting $l\rightarrow \infty$, we obtain
$$
\sum_{i_1,i_2,\ldots,i_m=1}^na_{i_1i_2i_3\ldots i_m}v_{i_1}v_{i_2}\bar x_{i_3}\cdots \bar x_{i_m}={\bf v}^\top(\mathcal{A}\bar {\bf x}^{m-2}){\bf v}\leq 0,
$$
which is a contradiction. The proof is completed.
\qed\end{proof}

We are now at the stage of extending Theorem 7.3.12 of \cite{CPS92} to tensors, which is the main stability result of the problem under consideration.

\begin{theorem}\label{KHProp}
Let $K\in C(\mathbb{R}^n)$, $\bar {\bf q}\in \mathbb{R}^n$ and  $\bar{\mathcal{A}}\in {\mathcal T}_{m,n}$. Let $\bar {\bf x}\in {\rm SOL}(K,\bar {\bf q},\bar{\mathcal{A}})$. If $\mathcal{A}$ is sub-symmetric with respect to the indices $\{i_2,\ldots,i_m\}$ and \eqref{Secondcon} holds, then for every open neighborhood $\mathcal{N}$ of
$\bar {\bf x}$ satisfying \eqref{Soldsolution}, there exists a scalar $c>0$ such that, for all vectors ${\bf q}\in \mathbb{R}^n$ and tensors $\mathcal{A}\in {\mathcal T}_{m,n}$,
$$
\sup\left\{\|{\bf x}-\bar {\bf x}\|~|~{\bf x}\in {\rm SOL}(K,{\bf q},\mathcal{A})\cap{\rm cl}(\mathcal{N})\right\}\leq c\left(\|{\bf q}-\bar {\bf q}\|+\|\mathcal{A}-\bar{\mathcal{A}}\|_F\right).
$$
\end{theorem}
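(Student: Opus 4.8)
The plan is to argue by contradiction, reproducing the limiting analysis of Proposition \ref{localunique} but upgrading it to a \emph{quantitative} second-order estimate in which the data perturbations $\|{\bf q}-\bar{\bf q}\|$ and $\|\mathcal{A}-\bar{\mathcal{A}}\|_F$ are controlled relative to $\|{\bf x}-\bar{\bf x}\|$. Fix an open neighborhood $\mathcal{N}$ satisfying \eqref{Soldsolution} with ${\rm cl}(\mathcal{N})$ compact. Suppose no scalar $c$ works; then for each $l$ there exist $({\bf q}^{(l)},\mathcal{A}^l)$ and ${\bf x}^{(l)}\in {\rm SOL}(K,{\bf q}^{(l)},\mathcal{A}^l)\cap {\rm cl}(\mathcal{N})$ with
$$\|{\bf x}^{(l)}-\bar{\bf x}\|> l\big(\|{\bf q}^{(l)}-\bar{\bf q}\|+\|\mathcal{A}^l-\bar{\mathcal{A}}\|_F\big).$$
Since ${\rm cl}(\mathcal{N})$ is compact, $\{{\bf x}^{(l)}\}$ is bounded, so the displayed inequality forces $({\bf q}^{(l)},\mathcal{A}^l)\to(\bar{\bf q},\bar{\mathcal{A}})$; by the closedness of the solution map (Theorem \ref{theorem1}(i)) together with \eqref{Soldsolution}, every accumulation point of $\{{\bf x}^{(l)}\}$ equals $\bar{\bf x}$, whence ${\bf x}^{(l)}\to\bar{\bf x}$ and ${\bf x}^{(l)}\neq\bar{\bf x}$. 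Passing to a subsequence, the unit vectors ${\bf v}^{(l)}:=({\bf x}^{(l)}-\bar{\bf x})/\tau_l$, where $\tau_l:=\|{\bf x}^{(l)}-\bar{\bf x}\|\to 0$, converge to some ${\bf v}$ with $\|{\bf v}\|=1$, and ${\bf v}\in\mathscr{T}(\bar{\bf x},K)$ directly from the definition of the tangent cone.

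Next I would extract the two complementarity cross-inequalities exactly as in Proposition \ref{localunique}: from $\bar{\bf x},{\bf x}^{(l)}\in K$ and the respective slacks lying in $K^*$ one gets $({\bf x}^{(l)}-\bar{\bf x})^\top(\bar{\mathcal{A}}\bar{\bf x}^{m-1}+\bar{\bf q})\geq0$ and $({\bf x}^{(l)}-\bar{\bf x})^\top(\mathcal{A}^l({\bf x}^{(l)})^{m-1}+{\bf q}^{(l)})\leq0$. In the second one I split the slack as
$$\mathcal{A}^l({\bf x}^{(l)})^{m-1}+{\bf q}^{(l)}=\big(\bar{\mathcal{A}}\bar{\bf x}^{m-1}+\bar{\bf q}\big)+\big(\mathcal{A}^l-\bar{\mathcal{A}}\big)({\bf x}^{(l)})^{m-1}+\big({\bf q}^{(l)}-\bar{\bf q}\big)+\big(\bar{\mathcal{A}}({\bf x}^{(l)})^{m-1}-\bar{\mathcal{A}}\bar{\bf x}^{m-1}\big),$$
and use the sub-symmetry of $\bar{\mathcal{A}}$ to expand the last bracket, via the first-order Taylor formula for $F({\bf x})=\bar{\mathcal{A}}{\bf x}^{m-1}$ whose Jacobian at $\bar{\bf x}$ is $(m-1)\bar{\mathcal{A}}\bar{\bf x}^{m-2}$, as $(m-1)\bar{\mathcal{A}}\bar{\bf x}^{m-2}({\bf x}^{(l)}-\bar{\bf x})+o(\tau_l)$. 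Dividing by $\tau_l$ and letting $l\to\infty$ (the perturbation terms vanish because $\|{\bf q}^{(l)}-\bar{\bf q}\|\to0$ and $\|(\mathcal{A}^l-\bar{\mathcal{A}})({\bf x}^{(l)})^{m-1}\|\to0$, and the quadratic term carries a factor $\tau_l\to0$) gives ${\bf v}^\top(\bar{\mathcal{A}}\bar{\bf x}^{m-1}+\bar{\bf q})\leq0$; combined with the first inequality this yields ${\bf v}^\top(\bar{\mathcal{A}}\bar{\bf x}^{m-1}+\bar{\bf q})=0$, i.e. ${\bf v}\in\mathbb{S}(\bar{\bf x})$, so that ${\bf v}\in\mathscr{T}(\bar{\bf x},K)\cap\mathbb{S}(\bar{\bf x})$.

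The decisive step is to push the \emph{same} inequality to second order. After discarding the nonnegative term $({\bf x}^{(l)}-\bar{\bf x})^\top(\bar{\mathcal{A}}\bar{\bf x}^{m-1}+\bar{\bf q})\ge0$ and dividing the remainder by $\tau_l^2$, one is left with
$$(m-1)({\bf v}^{(l)})^\top\bar{\mathcal{A}}\bar{\bf x}^{m-2}{\bf v}^{(l)}+o(1)+\frac{({\bf x}^{(l)}-\bar{\bf x})^\top(\mathcal{A}^l-\bar{\mathcal{A}})({\bf x}^{(l)})^{m-1}}{\tau_l^2}+\frac{({\bf x}^{(l)}-\bar{\bf x})^\top({\bf q}^{(l)}-\bar{\bf q})}{\tau_l^2}\leq0.$$
Here the violating inequality is essential: it yields $\big(\|{\bf q}^{(l)}-\bar{\bf q}\|+\|\mathcal{A}^l-\bar{\mathcal{A}}\|_F\big)/\tau_l<1/l$, so by Cauchy--Schwarz both perturbation quotients are $O(1/l)\to0$ (using $\|(\mathcal{A}^l-\bar{\mathcal{A}})({\bf x}^{(l)})^{m-1}\|\le c_0\|\mathcal{A}^l-\bar{\mathcal{A}}\|_F\|{\bf x}^{(l)}\|^{m-1}$ and the boundedness of $\{{\bf x}^{(l)}\}$). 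Letting $l\to\infty$ gives ${\bf v}^\top(\bar{\mathcal{A}}\bar{\bf x}^{m-2}){\bf v}\le0$, i.e. $\sum a_{i_1\ldots i_m}v_{i_1}v_{i_2}\bar x_{i_3}\cdots\bar x_{i_m}\le0$ for a nonzero ${\bf v}\in\mathscr{T}(\bar{\bf x},K)\cap\mathbb{S}(\bar{\bf x})$, contradicting \eqref{Secondcon}.

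I expect the main obstacle to be precisely this gear-shift from first to second order. The first-order limit only certifies ${\bf v}\in\mathbb{S}(\bar{\bf x})$ and is insensitive to the magnitude of the data perturbation, whereas the sign contradiction with \eqref{Secondcon} lives at the quadratic scale $\tau_l^2$, where the perturbation quotients would blow up were it not for the hypothesized failure of the Lipschitz bound forcing $\|{\bf q}^{(l)}-\bar{\bf q}\|+\|\mathcal{A}^l-\bar{\mathcal{A}}\|_F=o(\tau_l)$. Secondary care is needed to guarantee that the Taylor remainder is genuinely $o(\tau_l)$ (it is, since $F$ is a fixed polynomial map and ${\bf x}^{(l)}\to\bar{\bf x}$) and to justify restricting attention to a bounded $\mathcal{N}$, which is what makes $\{{\bf x}^{(l)}\}$ bounded and, in turn, forces $({\bf q}^{(l)},\mathcal{A}^l)\to(\bar{\bf q},\bar{\mathcal{A}})$.
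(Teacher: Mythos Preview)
Your proposal is correct and follows essentially the same route as the paper's own proof: contradiction, extract sequences with $\|{\bf x}^{(l)}-\bar{\bf x}\|>l(\|{\bf q}^{(l)}-\bar{\bf q}\|+\|\mathcal{A}^l-\bar{\mathcal{A}}\|_F)$, use boundedness to force $({\bf q}^{(l)},\mathcal{A}^l)\to(\bar{\bf q},\bar{\mathcal{A}})$ and (via Proposition~\ref{Convergsolution} or equivalently Theorem~\ref{theorem1}(i) plus \eqref{Soldsolution}) ${\bf x}^{(l)}\to\bar{\bf x}$, take a subsequential limit ${\bf v}$ of the normalized differences, show ${\bf v}\in\mathscr{T}(\bar{\bf x},K)\cap\mathbb{S}(\bar{\bf x})$ from the cross-complementarity inequalities, and then push the key inequality to second order using the Taylor expansion of $\bar{\mathcal{A}}{\bf x}^{m-1}$ (valid by sub-symmetry) to contradict \eqref{Secondcon}. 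Your treatment of the second-order step is in fact slightly more explicit than the paper's: where the paper simply records $\|{\bf q}^{(l)}-\bar{\bf q}\|=o(\tau_l)$ and $\|\mathcal{A}^l-\bar{\mathcal{A}}\|_F=o(\tau_l)$ and absorbs the perturbation into an $o(\|{\bf x}^{(l)}-\bar{\bf x}\|)$ term, you spell out that after dividing by $\tau_l^2$ the perturbation quotients are $O(1/l)$, which is exactly the same mechanism.
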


\begin{proof}
We argue by contradiction. Suppose that the conclusion is not true, then there exists a sequence $\{{\bf x}^{(l)}\}\subset {\rm cl}(\mathcal{N})$ and a sequence of $\{(\mathcal{A}^l,{\bf q}^{(l)})\}$ such that, for each $l$, ${\bf x}^{(l)}\in {\rm SOL}(K,{\bf q}^{(l)},\mathcal{A}^l)$ and  satisfies
\begin{equation}\label{xlAq}
\left\|{\bf x}^{(l)}-\bar {\bf x}\right\|>l\left(\|{\bf q}^{(l)}-\bar {\bf q}\|+\|\mathcal{A}^l-\bar{\mathcal{A}}\|_F\right).
\end{equation}
It clear that $\{{\bf x}^{(l)}\}$ is bounded and ${\bf x}^{(l)}$ is distinct from $\bar {\bf x}$ for every $l$. Consequently, by (\ref{xlAq}), we know that $\|{\bf q}^{(l)}-\bar {\bf q}\|+\|\mathcal{A}^l-\bar{\mathcal{A}}\|_F\rightarrow 0$ as $l\rightarrow\infty$ and
\begin{equation}\label{qaxxl}
\|{\bf q}^{(l)}-\bar {\bf q}\|=o(\|{\bf x}^{(l)}-\bar {\bf x}\|)~~~{\rm and}~~~\|\mathcal{A}^l-\bar{\mathcal{A}}\|_F=o(\|{\bf x}^{(l)}-\bar {\bf x}\|).
\end{equation}
Let ${\bf v}$ be any accumulation point of the normalized sequence
$\{({\bf x}^{(l)}-\bar {\bf x})/\|{\bf x}^{(l)}-\bar {\bf x}\|\}$. It is clear that ${\bf v}\in {\mathscr T}(\bar {\bf x}, K)$, since $\|{\bf x}^{(l)}-\bar {\bf x}\|\rightarrow 0$ by Proposition \ref{Convergsolution}. Since $\bar {\bf x}\in {\rm SOL}(K,\bar {\bf q},\bar{\mathcal{A}})$ and ${\bf x}^{(l)}\in {\rm SOL}(K,{\bf q}^{(l)},\mathcal{A}^l)$ for every $l$, we have
$$
K\ni \bar {\bf x}\perp (\bar{\mathcal{A}}\bar {\bf x}^{m-1}+\bar {\bf q})\in K^*~~{\rm and}~~K\ni {\bf x}^{(l)}\perp \mathcal{A}^l({\bf x}^{(l)})^{m-1}+{\bf q}^{(l)}\in K^*,~\forall ~l=1,2,\ldots.
$$
Moreover, since ${\bf x}^{(l)}\rightarrow \bar {\bf x}$ and $({\bf q}^{(l)},\mathcal{A}^l)\rightarrow (\bar {\bf q},\bar{\mathcal{A}})$ as $l\rightarrow\infty$, by a similar way used in Proposition \ref{localunique}, we obtain
${\bf v}^\top\left(\bar{\mathcal{A}}\bar {\bf x}^{m-1}+\bar {\bf q}\right)=0$, that is, ${\bf v}\in S(\bar {\bf x})$, and hence ${\bf v}\in {\mathscr T}(\bar {\bf x},K)\cap {\mathbb S}(\bar {\bf x})$. It is easy to see that
$$({\bf x}^{(l)}-\bar {\bf x})^\top(\mathcal{A}^l({\bf x}^{(l)})^{m-1}+{\bf q}^{(l)})\leq 0\quad \text{and} \quad({\bf x}^{(l)}-\bar {\bf x})^\top\left(\bar {\mathcal{A}}\bar {\bf x}^{m-1}+\bar {\bf q}\right)\geq 0.$$
 Consequently, since $\bar{\mathcal{A}}$ is {\it sub-symmetric} with respect to the indices $\{i_2,\ldots,i_m\}$, it holds that for every $l$,
$$
\begin{array}{lll}
0&\geq &({\bf x}^{(l)}-\bar {\bf x})^\top\left(\mathcal{A}^l({\bf x}^{(l)})^{m-1}+{\bf q}^{(l)}\right)\\
&=&({\bf x}^{(l)}-\bar {\bf x})^\top\left((\mathcal{A}^l-\bar {\mathcal{A}})({\bf x}^{(l)})^{m-1}+{\bf q}^{(l)}-\bar {\bf q}+\bar {\mathcal{A}}({\bf x}^{(l)})^{m-1}+\bar {\bf q}\right)\\
&=&({\bf x}^{(l)}-\bar {\bf x})^\top\left(\bar {\mathcal{A}}({\bf x}^{(l)})^{m-1}+\bar {\bf q}+o(\|{\bf x}^{(l)}-\bar {\bf x}\|)\right)\\
&=&({\bf x}^{(l)}-\bar {\bf x})^\top\left(\bar {\mathcal{A}}\bar {\bf x}^{m-1}+(m-1)\bar{\mathcal{A}}\bar {\bf x}^{m-2}({\bf x}^{(l)}-\bar {\bf x})+\bar {\bf q}+o(\|{\bf x}^{(l)}-\bar {\bf x}\|)\right)\\
&\geq &(m-1)({\bf x}^{(l)}-\bar {\bf x})^\top\left(\mathcal{A}\bar {\bf x}^{m-2}({\bf x}^{(l)}-\bar {\bf x})+o(\|{\bf x}^{(l)}-\bar {\bf x}\|)\right),
\end{array}
$$
where the second equality comes from (\ref{qaxxl}) and the boundedness of $\{{\bf x}^{(l)}\}$, and the third equality is due to the fact that $\bar{\mathcal{A}}{\bf x}^{m-1}$ is continuously differentiable at $\bar {\bf x}$. By diving by $\|{\bf x}^{(l)}-\bar {\bf x}\|^2$ in the above expression and letting $l\rightarrow \infty$, we know
$$
{\bf v}^\top\mathcal{A}\bar {\bf x}^{m-2}{\bf v}\leq 0.
$$
It is a contradiction. The proof is completed.
\qed\end{proof}

\section{Conclusions}\label{SecCon}
We consider the TCP over a general cone, which is an interesting generalization of LCP studied deeply in the literature. Unfortunately, the results of LCPs are not directly applicable to TCPs due to the nonlinearity of TCPs. However, as a special case of NCPs, in this paper, we first derive some specific forms of the solution existence results for TCP($K, {\bf q},\mathcal{A}$) from the results presented in \cite{GP92}. In particular, when the general cone $K$ reduces to a nonnegative cone ${\mathbb R}_+^n$ as discussed in the literature, we show the existence of a solution of TCP(${\mathbb R}_+^n, {\bf q},\mathcal{A}$) under copositivity, which is a weaker condition than the strict copositivity in previous papers. Moreover, we investigate the topological properties of the solution set SOL($K, {\bf q},\mathcal{A}$) and stability of TCP($K, {\bf q},\mathcal{A}$) at a given solution. These results are new and further enrich the theory of TCPs.

\medskip
\begin{acknowledgements}
The authors would like to thank Professor M. Seetharama Gowda and the two anonymous referees for their valuable comments on our manuscript and bringing our attention to the relevant reference \cite{DQW13,GP92,ZQZ14}.
This work was supported in part by National Natural Science Foundation of China at Grant No. 11571087 and Natural Science Foundation of Zhejiang Province at Grant Nos. (LZ14A010003, LY17A010028).
\end{acknowledgements}

%\bibliographystyle{PJO}
%\bibliography{E:/Research/JabRef/HeArt,E:/Research/JabRef/Polynomial,E:/Research/JabRef/HeBook}

\end{document}